\title{On the topological Brauer group of generalized Kummer varieties}
\author{}
\date{\today}
\def\Z{{\mathbb Z}}
\def\Q{{\mathbb Q}}
\def\Kah{K\"ahler}
\def\hk{hyper-K\"ahler}
\def\hkm{hyper-K\"ahler manifold}
\def\phi{\varphi}
\def\-{\textup{-}}
\def\llra{\hbox to 10mm{\rightarrowfill}}
\def\lllra{\hbox to 15mm{\rightarrowfill}}
\def\llla{\hbox to 10mm{\leftarrowfill}}
\def\lllla{\hbox to 15mm{\leftarrowfill}}
 \def\alt{\mathfrak A}
\DeclareMathOperator{\Bl}{Bl}
\DeclareMathOperator{\Br}{Br}
\DeclareMathOperator{\codim}{codim}
\DeclareMathOperator{\Ext}{Ext}
\DeclareMathOperator{\Kum}{Kum}
\DeclareMathOperator{\NS}{NS}
\DeclareMathOperator{\Supp}{Supp}
\DeclareMathOperator{\Stab}{Stab}
\DeclareMathOperator{\tors}{tors}
\DeclareMathOperator{\Ind}{Ind}
\DeclareMathOperator{\triv}{triv}
\def\llra{\hbox to 10mm{\rightarrowfill}}
\def\lllra{\hbox to 15mm{\rightarrowfill}}
\def\subset{\subseteq}
\newtheorem{theorem}{Theorem}[section]
\newtheorem*{theorem*}{Theorem}
\newtheorem{lemma}[theorem]{Lemma}
\newtheorem{proposition}[theorem]{Proposition}
\newtheorem{corollary}[theorem]{Corollary}
\newtheorem*{claim*}{Claim}
\newtheorem{claim}[theorem]{Claim}
\theoremstyle{definition}
\newtheorem{question}[theorem]{Question}
\theoremstyle{remark}
\newtheorem*{remark*}{Remark}
\newtheorem*{note*}{Note}
\def\sss[#1]{{S^{[#1]}}}
\def\setminus{\smallsetminus}
\author{Moritz Hartlieb}
\address{Mathematisches Institut, Universität Bonn, Endenicher Allee 60, 53115
Bonn, Germany}
\email{hartlieb@math.uni-bonn.de}
\author{Matteo Verni}
\address{Sorbonne Université, Université Paris Cité, CNRS, IMJ-PRG, F-75005 Paris, France}
 \email{{\tt matteo.verni@imj-prg.fr}}
\begin{document}

\begin{abstract}
We study the topological Brauer group of generalized Kummer varieties. We prove that it vanishes when their dimension is divisible by $4$, while for all other dimensions except  dimension $10$ we prove that it is at most $8$-torsion.
\end{abstract}

\maketitle

\setcounter{tocdepth}{1}
\tableofcontents
\section{Introduction}
The study of \hk \ manifolds is a central topic in the theory of compact Kähler manifolds: in virtue of the Beauville--Bogomolov decomposition theorem \cite[Thm.\ 1]{beauville}, they represent one of three ``fundamental" types of compact Kähler manifolds with \(c_1(K_X)=0\), together with abelian varieties and strict Calabi-Yau manifolds.
Their topology is extremely interesting: on the one hand it is severely restricted compared to that of a general compact \Kah \ manifold, for example by the Fujiki relations, \cite{Fujiki}. Moreover, endowed with the usual Hodge structure, the cohomology of \hkm s largely determines their geometry by the Torelli Theorem, see for example \cite{Markman_torelli}. On the other hand, the topology of \hkm s remains very mysterious, with a lot of basic open questions. One of these questions is the following:

\begin{question}\label{que:hk_torsion}
Let $X$ be a \hk \  manifold. Is $H^*(X, \mathbb Z)$ torsion-free?
\end{question}
For \hk \ manifolds of $K3^{[n]}$-type, the answer is positive by \cite{Markman2007_integral}, which has then been further generalized by \cite{Totaro_2020_integral} to the Hilbert scheme of points on any smooth projective surface with torsion-free cohomology. Already for the next most understood \hk \ deformation type, that of \textit{generalized Kummer varieties} (or \textit{Kummer manifolds}), denoted as \(\Kum_n(A)\), this question is open in its full generality. In \cite{kapfermenet}, however,  Kapfer and Menet give a positive answer in the case of Kummer fourfolds. 

The abelian group $H^3(X, \mathbb Z)_{\tors}$ is of particular interest as it sits in the short exact sequence
$$0 \to H^2(X, \mathbb Z) / \NS(X, \mathbb Z) \otimes_\Z \Q/\Z \to \Br(X) \to H^3(X, \mathbb Z)_{\tors} \to 0.$$
For this reason it is often called the \textit{topological Brauer group}, see also \cite[Sec.\ 1]{grothendieck}, and it is the lowest degree cohomology group whose torsion part is not known for any \hk \ manifold other than those of K3\(^{[n]}\)-type and Kummer fourfolds.

The aim of this note is to prove the following result:
\begin{theorem}\label{thm:main}
If \(n \neq 6\), then
    $$ \gcd(2^3, n) \cdot H^3(\Kum_{n-1}(A), \mathbb Z)_{\tors} = 0.$$
If instead \(n=6\), then
$$6 \cdot H^3(\Kum_{5}(A), \mathbb Z)_{\tors} = 0.$$

\end{theorem}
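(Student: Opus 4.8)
The plan is to compare $\Kum_{n-1}(A)$ with the Hilbert scheme $A^{[n]}$, whose integral cohomology is torsion-free by Totaro's theorem recalled in the introduction. One starts from the classical presentation $A^{[n]}\isom (A\times\Kum_{n-1}(A))/A[n]$, in which $t\in A[n]$ acts by $(a,\xi)\mapsto(a+t,\xi-t)$; the action is free (already on the first factor), so $A\times\Kum_{n-1}(A)\to A^{[n]}$ is a finite \'etale $A[n]$-cover. For $n\ge 3$ --- the cases $n\le 2$ being trivial, as $\Kum_0(A)$ is a point and $\Kum_1(A)$ a K3 surface --- the variety $\Kum_{n-1}(A)$ is simply connected with $H^2(\Kum_{n-1}(A),\mathbb{Z})$ torsion-free, so the Künneth formula gives $H^3(A\times\Kum_{n-1}(A),\mathbb{Z})_{\tors}\isom H^3(\Kum_{n-1}(A),\mathbb{Z})_{\tors}$. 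It therefore suffices to bound the torsion of $H^3(A\times\Kum_{n-1}(A),\mathbb{Z})$, using the free $A[n]$-action and the torsion-freeness of $H^*(A^{[n]},\mathbb{Z})$.

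For this I would run the Cartan--Leray spectral sequence of the cover, $E_2^{p,q}=H^p(A[n];H^q(A\times\Kum_{n-1}(A),\mathbb{Z}))\Rightarrow H^{p+q}(A^{[n]},\mathbb{Z})$, in which each $E_\infty^{0,q}$ is a \emph{subgroup} of the torsion-free group $H^q(A^{[n]},\mathbb{Z})$. Since $H^p(A[n];M)$ is a torsion group for every $p\ge 1$ and every finitely generated $M$, a short chase along the filtration of $H^3(A^{[n]},\mathbb{Z})$ gives $E_\infty^{p,3-p}=0$ for $p\ge 1$ and $E_\infty^{0,3}$ torsion-free; hence every torsion class in $E_2^{0,3}=\bigl(H^3(\Kum_{n-1}(A),\mathbb{Z})_{\tors}\bigr)^{A[n]}$ is killed by one of the differentials $d_2,d_3,d_4$ out of $E^{0,3}$. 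Their targets are subquotients of $H^{\ge 1}(A[n];-)$ with coefficients in $H^{\le 2}$ of $A\times\Kum_{n-1}(A)$, hence are annihilated by $n$. This already bounds the $A[n]$-invariant torsion, but gives only the crude estimate $n\cdot(\text{invariant torsion})=0$.

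Two further steps are needed: (a) passing from the $A[n]$-invariant torsion to all of $H^3(\Kum_{n-1}(A),\mathbb{Z})_{\tors}$, and (b) refining the exponent $n$ to $\gcd(2^3,n)$. Both rely on a description of $H^2(\Kum_{n-1}(A),\mathbb{Z})$ and $H^3(\Kum_{n-1}(A),\mathbb{Z})$ as \emph{integral} $A[n]$-modules (equivalently, of the integral local systems $R^2s_*\mathbb{Z}$ and $R^3s_*\mathbb{Z}$ for the summation map $s\colon A^{[n]}\to A$), not just of the rational isotypic decomposition. For the $2$-primary part one filters these modules by the sub-modules on which the action factors through $A[2^j]$, $j\le v_2(n)$; the torsion in the relevant group cohomology accumulates by extensions along this filtration, but in cohomological degree $3$ at most three successive extensions can contribute, producing the exponent $2^{\min(v_2(n),3)}$, which is exactly the $2$-part of $\gcd(2^3,n)$. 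For odd primes one shows the relevant isotypic contributions either vanish or are already accounted for in the torsion-free cohomology of $A^{[n]}$, so no odd torsion survives --- except when $n=6$, where a genuine $3$-torsion class in $H^3(\Kum_5(A),\mathbb{Z})$ persists. Its presence can be traced to the $A[3]$-fixed locus in $\Kum_5(A)$ --- a disjoint union of Kummer K3 surfaces, the smallest positive-dimensional such locus --- and yields the weaker bound $6\cdot H^3(\Kum_5(A),\mathbb{Z})_{\tors}=0$.

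The main difficulty is exactly in steps (a) and (b): the spectral sequence, fed only with ``$H^*(A^{[n]},\mathbb{Z})$ is torsion-free'', is genuinely insufficient, since a priori torsion could live in cohomologically trivial $A[n]$-submodules invisible to it. One must therefore pin down the integral (not merely rational) $A[n]$-module structure of $H^2$ and $H^3$ of $\Kum_{n-1}(A)$ and the precise differentials it imposes, and then carry out the extension book-keeping that produces the cap $2^3$ and isolates the dimension-$10$ coincidence responsible for the exceptional case $n=6$. This is the technical heart of the argument.
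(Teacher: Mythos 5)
Your first step is sound: the translation map $A\times\Kum_{n-1}(A)\to A^{[n]}$, $(a,\xi)\mapsto \xi+a$, is indeed a free $A[n]$-quotient, $H^*(A^{[n]},\Z)$ is torsion-free, and the K\"unneth argument identifying $H^3(A\times\Kum_{n-1}(A),\Z)_{\tors}$ with $H^3(\Kum_{n-1}(A),\Z)_{\tors}$ is correct. (Minor point: $E_\infty^{0,3}$ is a \emph{quotient} of $H^3(A^{[n]},\Z)$ by the torsion subgroup $F^1H^3$, not a subgroup; the conclusion that it is torsion-free survives because $F^1H^3$ is torsion inside a torsion-free group, hence zero.) But this only bounds the \emph{$A[n]$-invariant} torsion, as you acknowledge in step (a), and the translations by $A[n]$ are exactly the automorphisms known to act trivially on $H^2$ but not on higher cohomology, so the invariant part is a genuine restriction. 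Steps (a) and (b), which you correctly identify as the heart of the matter, are not carried out: the proposed $2$-adic filtration of $H^{\le 3}$ by submodules on which $A[n]$ acts through $A[2^j]$, and the assertion that ``at most three successive extensions'' cap the exponent at $2^{\min(v_2(n),3)}$, presuppose knowledge of the integral $A[n]$-module structure of $H^3(\Kum_{n-1}(A),\Z)$ --- including its torsion, which is the very thing being bounded --- and no mechanism is given that would actually produce these statements. Likewise, the claim that for $n=6$ a genuine $3$-torsion class \emph{persists}, traced to the $A[3]$-fixed locus, is speculation and is stronger than the theorem (which only fails to exclude $3$-torsion in that case).

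The paper reaches the bound by two covers of a different nature. First, the degree-$n$ dominant rational map $A^{[n-1]}\dashrightarrow \Kum_{n-1}(A)$ (restrict $K_{n-1}(A)\to\Kum_{n-1}(A)$ through the quotient by $\gS_{n-1}$) together with Markman's torsion-freeness of $H^3(A^{[n-1]},\Z)$ and the projection formula kills $n\cdot H^3(\Kum_{n-1}(A),\Z)_{\tors}$ outright --- all of it, with no invariance caveat, because here the space with torsion-free cohomology maps \emph{onto} the Kummer, so $f_*f^*=n$ applies in the useful direction (your cover goes the other way, which is why transfer does not close your gap (a)). Second, the factor $2^3$ has nothing to do with a $2$-adic filtration of $A[n]$: it comes from the rational double cover $A^{n-1}/\alt_n\dashrightarrow\Kum_{n-1}(A)$, made precise as an honest double cover $\Bl_{\overline\Delta}(U/\alt_n)\to V$ over an open subset $V$ with the same $H^3$-torsion, combined with the Cartan--Leray spectral sequence for the free $\alt_n$-action on $U$ and explicit computations (via Shapiro's lemma) showing $H^p(\alt_n,H^{3-p}(A^{n-1},\Z))$ is $2$-torsion for $p=1,2,3$ once $n\ge 8$. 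The exceptional case $n=6$ arises because $H^3(\alt_6,\Z)\simeq\Z/6\Z$ (the exceptional Schur multiplier of $\alt_6$ and $\alt_7$), not from any fixed-locus geometry. As written, your proposal does not constitute a proof of either the $\gcd(2^3,n)$ bound or the $n=6$ statement.
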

For odd values of \(n\), this amounts to the following:
\begin{corollary}\label{cor:nice_statement_even}
    The third integral cohomology group of a generalized Kummer variety of dimension divisible by four is torsion-free.
\end{corollary}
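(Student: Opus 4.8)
The plan is to prove Theorem~\ref{thm:main} and deduce Corollary~\ref{cor:nice_statement_even} from it. The key object is $\Kum_{n-1}(A)$, the fiber of the Albanese--summation map $\Hilb^n(A)\to A$, a \hk{} manifold of dimension $2(n-1)$. First I would set up a handle on $H^3(\Kum_{n-1}(A),\Z)$ using the geometry relating $\Kum_{n-1}(A)$ to $\Hilb^n(A)$ and to $A^{n}$. The torsion in $H^3$ of the Hilbert scheme of points on an abelian surface, and hence of the generalized Kummer, is forced to be $2$-primary and $3$-primary by general principles (the $S_n$-equivariant structure, with only the primes dividing $n!$ relevant, and a degree-$3$ bound); the whole point is to control how large these pieces can be, and in particular to show they die after multiplying by $\gcd(8,n)$ (resp.\ by $6$ when $n=6$).

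**Main steps.** I expect the argument to run as follows. (1) Compute $H^*(\Kum_{n-1}(A),\Z)$ in low degree via the $S_n$-action on $H^*(A^n,\Z)$ together with the description of $\Hilb^n(A)$ through the Hilbert--Chow morphism and the incidence varieties (following G\"ottsche--Soergel and the integral refinements used by Totaro and by Kapfer--Menet); the torsion in $H^3(\Hilb^n(A),\Z)$ comes from the failure of the transfer argument to split integrally, i.e.\ from the cohomology of the symmetric group $S_n$ with coefficients in the relevant $A^n$-modules in degrees $\le 3$. (2) Identify the precise subquotient of $H^*(S_n,-)$ that contributes; here one uses that $H^i(S_n,\Z)$ and the twisted versions are controlled, in the range $i\le 3$, by $2$- and $3$-torsion of small exponent, and the exponent bound is exactly the $\gcd(8,n)$ (resp.\ $6$) appearing in the statement — this is where the numerical condition enters. (3) Pass from $\Hilb^n(A)$ to $\Kum_{n-1}(A)$ via the fibration $\Hilb^n(A)\to A$ and a K\"unneth/Leray analysis, checking that restricting to the fiber does not introduce new torsion beyond the stated bound and, when $4\mid n$, kills it entirely (since then $\gcd(8,n)$ does not suffice on its own — one needs the extra constraint coming from the Kummer condition, i.e.\ the translation action, to eliminate the $2$-primary part). (4) Finally, when $n$ is odd, $\gcd(8,n)=1$, so $H^3(\Kum_{n-1}(A),\Z)_{\tors}=0$, and $\dim\Kum_{n-1}(A)=2(n-1)$ is divisible by $4$ exactly when $n$ is odd; this yields Corollary~\ref{cor:nice_statement_even} immediately.

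**The main obstacle.** The hard part will be step~(3): controlling the torsion under restriction to the Albanese fiber. For $\Hilb^n(A)$ one can hope to leverage the full symmetric-group transfer and K\"unneth formula over $A^n$, but $\Kum_{n-1}(A)$ is cut out by a nontrivial condition and the relevant spectral sequence (Leray for $\Hilb^n(A)\to A$, restricted over a point) can a priori have differentials creating or propagating torsion in degree $3$. Establishing that the only surviving torsion is $2$- and $3$-primary of the asserted exponent — and that it genuinely vanishes when $4\mid \dim$ — requires a careful bookkeeping of the $S_n$-module structure of $H^{\le 3}(A^{n-1},\Z)$ together with the residual translation action by $A[n]$, and I expect this to be the technical heart of the paper. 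The case $n=6$ is singled out precisely because there $\gcd(8,6)=2$ is too weak and a separate argument bumps the bound to $6$; I would treat it by hand after the general mechanism is in place.
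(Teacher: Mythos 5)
Your final step (4) --- that $\dim \Kum_{n-1}(A) = 2(n-1)$ is divisible by $4$ exactly when $n$ is odd, and that then $\gcd(2^3,n)=1$ forces the torsion to vanish --- is precisely how the paper deduces the corollary from Theorem \ref{thm:main}. The gap is earlier: the mechanism you sketch for the theorem would not actually produce the factor $n$ in $\gcd(2^3,n)$, and without that factor the corollary does not follow. In the paper the $n$-divisibility comes from a geometric input your outline omits entirely: the degree-$n$ dominant rational map $A^{[n-1]} \dashrightarrow \Kum_{n-1}(A)$, obtained by quotienting $K_{n-1}(A)=\varepsilon^{-1}(0)\subset A^n$ by $\mathfrak S_{n-1}$ acting on the first $n-1$ factors, combined with Markman's theorem that $H^3(A^{[n-1]},\mathbb Z)$ is torsion-free and the projection formula (Lemma \ref{lem:dn_tors}, Corollary \ref{cor:n-tors_of_Kum}). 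Your steps (1)--(2) instead attribute the entire exponent bound to cohomology of $\mathfrak S_n$ with coefficients in modules built from $H^*(A^n,\mathbb Z)$. A transfer argument of that kind, for the degree-$n!$ cover $A^n\to A^{(n)}$, can only bound the torsion by primes dividing $n!$; it is the right sort of tool for the $2$-primary bound (the paper gets $2^3$ from the Cartan--Leray spectral sequence for the free $\mathfrak A_n$-action on an open subset $U$ together with a ramified double cover $\Bl_{\overline\Delta}(U/\mathfrak A_n)\to V$), but it cannot by itself yield a bound of the shape $\gcd(8,n)$, which is exactly what makes the torsion vanish for odd $n$.

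There is also an internal inconsistency between your steps (3) and (4): in (3) you claim that the torsion is killed ``when $4\mid n$'' by an extra constraint from the translation action of $A[n]$, whereas the corollary concerns $4\mid\dim$, i.e.\ $n$ odd (as you correctly state in (4)); for $4\mid n$ the theorem makes no vanishing claim at all. No translation action and no Leray analysis of $\Hilb^n(A)\to A$ appear in the paper: once the two independent bounds ($n$-torsion from the Hilbert-scheme cover, and $2^3$-torsion --- or $2\cdot 12^3$ for $n=5,7$ --- from the alternating-group argument) are in place, the corollary is immediate from their coprimality for odd $n$, with no further geometric input.
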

As already mentioned, the case \(n=2\) follows from \cite{kapfermenet}. This result confirms folklore expectations, as stated for example in \cite[Rem.\ 2.1.(ii)]{huyb2024}

The strategy of the proof is inspired by and generalizes the approach of \cite{kapfermenet}:
we first consider a rational cover of degree \(n\) by a smooth projective variety with no torsion in degree three cohomology, which forces \(H^3(\Kum_{n-1}(A),\Z)_{\tors}\) to be entirely of \(n\)-torsion. By considering a double cover of a suitable open of \(\Kum_{n-1}(A)\), we then show that \(H^3(\Kum_{n-1}(A),\Z)_{\tors}\) is a $2$-group, via an analysis of the cohomology of the alternating group and the Cartan--Leray spectral sequence. When \(n\) is odd, we obtain Corollary \ref{cor:nice_statement_even}.
We are then left with asking ourselves the following

\begin{question}
    Does \(H^3(\Kum_{2n+1}(A),\Z)_{\tors}\) vanish for \(n \geq 1\) as well?
\end{question}

\subsection{Acknowledgements}
The authors would like to thank Daniel Huybrechts and Emanuele Macrì for their interest in the project, as well as Nick Addington for sharing his computations and suggesting that earlier bounds could be improved. The first author would like to thank Claire Voisin for the invitation to Paris, where this project was initiated. 
Many thanks to Daniel Huybrechts and Paolo Stellari, respectively, for bringing up Question \ref{que:hk_torsion} at the ``K3 surfaces and friends"\footnote{Lorentz Center, Leiden, June 2025  (\href{https://www.lorentzcenter.nl/k3-surfaces-en-friends-brauer-groups-and-moduli.html}{\underline{link}}) 
}
and PRAGMATIC\footnote{University of Catania, Italy, September 2025 (\href{https://www.dmi.unict.it/pragmatic/docs/Pragmatic2025.html}{\underline{link}})} summer schools. We thank the organizers of said events for the wonderful research environment.
Both authors were supported by the ERC Synergy Grant 854361 HyperK.
The first author is grateful for the support provided by the International Max Planck Research School on Moduli Spaces at the Max Planck Institute for Mathematics in Bonn.

\section{Generalized Kummer varieties}
Let us briefly recall the definition of generalized Kummer varieties. Let \(A\) be an abelian surface and let \(A^{[n]}\) denote the Hilbert scheme of \(n\)-points on \(A\). The natural summation map \(\varepsilon \colon A^n\rightarrow A\) induces a morphism from the Hilbert scheme 
\[\Sigma \colon A^{[n]}\to A.\] 
The \textit{generalized Kummer \(2n\)-fold} is defined to be \[\Kum_{n-1}(A) \coloneqq \Sigma^{-1}(0) \hookrightarrow A^{[n]}.\]

We set \(K_{n-1}(A)\coloneqq \varepsilon^{-1}(0) \hookrightarrow A^n\). Of course, we have \(K_{n-1}(A)\simeq A^{n-1}\) as abelian varieties. 
The natural action of \(\mathfrak{S}_n\) on \(A^n\) restricts to \(K_{n-1}(A)\), and \(\Kum_{n-1}(A)\) is a crepant resolution of the quotient variety \(K_{n-1}/\mathfrak{S}_n\).
By \cite[Thm.\ 4]{beauville}, generalized Kummer varieties are \hkm s.

\subsection{\texorpdfstring{\(n\)}{n}-torsion via torsion-freeness of \texorpdfstring{\(H^3(A^{[n-1]}, \mathbb Z)\)}{H3(An, Z)}}

We start with a simple observation:

\begin{lemma}\label{lem:dn_tors}
Let \(X,Y\) be two smooth projective varieties of the same dimension and \(X\dashrightarrow Y\) a dominant rational map of degree \(d\). Let \(N\) be an integer and suppose \(N \cdot H^3(X,\Z)_{\tors}=0\). Then 
\[dN \cdot H^3(Y,\Z)_{\tors}=0.\]
\vspace{-15pt}

\end{lemma}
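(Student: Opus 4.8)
The plan is to replace the rational map $X\dashrightarrow Y$ by an honest generically finite surjective morphism from a smooth projective variety, and then run the standard transfer argument for such a cover, being careful that torsion is transported correctly.

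First I would invoke resolution of indeterminacy: choose a smooth projective variety $\tilde X$ together with a birational morphism $\pi\colon \tilde X\to X$ and a morphism $f\colon \tilde X\to Y$ representing the composite $X\dashrightarrow Y$. Since $\pi$ is birational and $X\dashrightarrow Y$ has degree $d$, the morphism $f$ is dominant --- hence surjective, being a morphism of projective varieties onto an irreducible one --- and generically finite of degree $d$, with $\dim \tilde X=\dim Y$. The key point is that the hypothesis survives the resolution: $\pi^*$ identifies $H^3(X,\Z)_{\tors}$ with $H^3(\tilde X,\Z)_{\tors}$, because $H^3(-,\Z)_{\tors}$ is a birational invariant of smooth projective varieties. (Concretely: by the weak factorization theorem $\pi$ is dominated by a chain of blow-ups along smooth centres, and the blow-up formula only adds a copy of $H^1$ of the centre --- which is torsion-free --- to $H^3$; equivalently, $H^3(-,\Z)_{\tors}$ is the non-divisible part of the birationally invariant Brauer group, via the exact sequence of the introduction.) In particular $N\cdot H^3(\tilde X,\Z)_{\tors}=0$.

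Next I would use the Gysin pushforward. Since $f$ is a surjective morphism between smooth projective (hence compact oriented) manifolds of equal dimension, there is a pushforward $f_*\colon H^*(\tilde X,\Z)\to H^*(Y,\Z)$ obeying the projection formula; applying it to $1\in H^0(\tilde X)$, together with $f_*1=\deg(f)\cdot 1$, gives $f_*\circ f^*=d\cdot\id$ on $H^*(Y,\Z)$. As $f^*$ and $f_*$ are group homomorphisms, they send torsion classes to torsion classes. Hence, for $\alpha\in H^3(Y,\Z)_{\tors}$, the class $f^*\alpha$ lies in $H^3(\tilde X,\Z)_{\tors}$, so $Nf^*\alpha=0$; applying $f_*$ yields $0=f_*(Nf^*\alpha)=N f_*f^*\alpha=dN\cdot\alpha$. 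Since $\alpha$ was arbitrary, $dN\cdot H^3(Y,\Z)_{\tors}=0$.

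The only non-formal ingredient is the birational invariance of $H^3(-,\Z)_{\tors}$, used to move the hypothesis from $X$ onto a model $\tilde X$ carrying an actual morphism to $Y$; the rest is the usual $f_*f^*=\deg f$ transfer. (In the applications of this note the lemma is used with $N=1$, the source being $A^{[n-1]}$, whose cohomology is torsion-free, but the argument is the same.)
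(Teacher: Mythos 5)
Your proposal is correct and follows exactly the paper's argument: resolve the indeterminacy, transport the hypothesis to the resolution via the birational invariance of $H^3(-,\Z)_{\tors}$, and conclude with the projection formula giving $f_*f^*=d\cdot\id$. You simply spell out the details that the paper leaves implicit.
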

\begin{proof}
Since \(H^3(-,\Z)_{\tors}\) is a birational invariant, by passing to a resolution of indeterminacies we may assume that \(f\) is everywhere defined. The result then follows from the projection formula.
\end{proof}
\begin{corollary}\label{cor:n-tors_of_Kum}
    The abelian group $H^3(\Kum_{n-1}(A), \mathbb Z)_{\tors}$ is $n$-torsion.
\end{corollary}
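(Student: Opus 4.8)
The plan is to apply Lemma \ref{lem:dn_tors} with $Y = \Kum_{n-1}(A)$ and $X = A^{[n-1]}$, the Hilbert scheme of $n-1$ points on $A$, taking $N = 1$. That $N=1$ is admissible follows from \cite{Totaro_2020_integral}: the integral cohomology of the Hilbert scheme of points on a smooth projective surface with torsion-free integral cohomology is torsion-free, and $H^*(A,\Z) \cong \bw{\bullet}{H^1(A,\Z)}$ is torsion-free because $A$ is an abelian surface. Since $A^{[n-1]}$ and $\Kum_{n-1}(A)$ are both smooth projective of dimension $2(n-1)$, it then suffices to produce a dominant rational map $A^{[n-1]} \dashrightarrow \Kum_{n-1}(A)$ of degree $n$: Lemma \ref{lem:dn_tors} will give $n \cdot H^3(\Kum_{n-1}(A),\Z)_{\tors} = 0$, which is the assertion.

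The map I would use is explicit. Writing $\Sigma$ also for the summation morphism $A^{[n-1]} \to A$, let $U \subseteq A^{[n-1]}$ be the dense open subset of reduced subschemes $Z = \{a_1,\dots,a_{n-1}\}$ with $a_1, \dots, a_{n-1}$ pairwise distinct and $-\Sigma(Z) \notin \{a_1, \dots, a_{n-1}\}$, and put
\[
\psi\colon U \lra A^{[n]}, \qquad Z \longmapsto Z \cup \bigl\{-\Sigma(Z)\bigr\}.
\]
As $\Sigma\bigl(\psi(Z)\bigr) = \Sigma(Z) - \Sigma(Z) = 0$, the map $\psi$ corestricts to a morphism $U \to \Kum_{n-1}(A) = \Sigma^{-1}(0)$, whose image contains the dense locus of reduced subschemes of $\Kum_{n-1}(A)$ with pairwise distinct points; hence $\psi$ is a dominant rational map $A^{[n-1]} \dashrightarrow \Kum_{n-1}(A)$. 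For its degree, a general $W = \{b_1, \dots, b_n\} \in \Kum_{n-1}(A)$ is reduced with $b_1 + \dots + b_n = 0$, and for every $j$ one has $\psi\bigl(W \smallsetminus\{b_j\}\bigr) = (W \smallsetminus\{b_j\}) \cup \bigl\{-\Sigma(W \smallsetminus\{b_j\})\bigr\} = (W \smallsetminus\{b_j\}) \cup \{b_j\} = W$; thus $\psi^{-1}(W)$ is exactly the set of the $n$ pairwise distinct subschemes $W \smallsetminus\{b_j\}$, $j = 1, \dots, n$, so $\deg \psi = n$.

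The only genuine content here is the construction of this degree-$n$ map, and the single mildly delicate point is the fibre count above (together with the density of $U$ and of $\psi(U)$, which is immediate since the relevant bad loci — non-reduced subschemes and the incidence conditions — are proper closed subsets). Everything else is formal. As a sanity check, one may rederive $\deg \psi = n$ from the birational models: via the Hilbert--Chow morphism $A^{[n-1]} \to A^{n-1}/\mathfrak{S}_{n-1}$, the isomorphism $A^{n-1} \isom K_{n-1}(A)$ sending $(a_1,\dots,a_{n-1})$ to $(a_1,\dots,a_{n-1},-\sum_i a_i)$, and the crepant resolution $\Kum_{n-1}(A) \to K_{n-1}(A)/\mathfrak{S}_n$, the map $\psi$ becomes the one induced by the subgroup inclusion $\mathfrak{S}_{n-1} \subseteq \mathfrak{S}_n$, of degree $[\mathfrak{S}_n : \mathfrak{S}_{n-1}] = n$.
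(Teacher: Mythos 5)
Your proposal is correct and follows essentially the same route as the paper: both reduce to Lemma \ref{lem:dn_tors} with $N=1$ via a degree-$n$ dominant rational map $A^{[n-1]}\dashrightarrow \Kum_{n-1}(A)$ and the torsion-freeness of $H^3(A^{[n-1]},\Z)$ (the paper cites \cite{Markman2007_integral} where you cite \cite{Totaro_2020_integral}; either works). The paper obtains the map and its degree by factoring $K_{n-1}(A)\dashrightarrow \Kum_{n-1}(A)$ through the $\mathfrak S_{n-1}$-quotient, which is exactly the birational description you give as a sanity check, while your explicit formula $Z\mapsto Z\cup\{-\Sigma(Z)\}$ and fibre count are a correct unwinding of the same construction.
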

\begin{proof}
    The map \(K_{n-1}(A) \dashrightarrow \Kum_{n-1}(A)\) factors through the quotient by \(\mathfrak{S}_{n-1}\) acting on the first \(n-1\) entries of \(A^n\), yielding a map of degree \(n\). 
    The quotient \(K_{n-1}(A)/\mathfrak{S}_{n-1}\) is birational to \(A^{[n-1]}\), hence we have a generically finite rational map  of degree \(n\)
    \[A^{[n-1]} \dashrightarrow \Kum_{n-1}(A).\]   
By \cite{Markman2007_integral} the group $H^3(A^{[n-1]},\Z)$ is torsion-free. The result then follows from Lemma \ref{lem:dn_tors} with \(N=1\).
\end{proof}
\section{Quotient by the alternating group}
Ideally, one would like to apply the argument of the previous section to the rational double cover $$A^{n-1} / \mathfrak A_n \dashrightarrow  \Kum_{n-1}(A).$$ However, there is no smooth projective model \(Z\) of $A^{n-1} / \alt_n$ for which \(H^3(Z, \Z)_{\tors}\) is known. 

Instead, we do the following:
let 
$$\Delta_i \coloneqq \{(a_1, \dots,a_n) \in A^n \mid \#\{a_1, \dots , a_n\}\leq  n-i\} \subset A^n$$
and
$$D_i \coloneqq \{Z \subset A \mid \#\Supp(Z) \leq n-i\} \subset A^{[n]}$$
be the standard filtration of the big diagonal, i.e., \(\Delta_1\), resp.\ the Hilbert--Chow divisor, i.e., \(D_1\). 
We then consider the open subsets \(\Delta \coloneq \Delta_1 \setminus \Delta_2 \subset \Delta_1\) and \(D \coloneqq D_1 \setminus D_2 \subset D_1\).

\begin{lemma}
For any \(k\geq 0\), we have
$$\codim_{K_{n-1}(A)}(\Delta_{k} \cap K_{n-1}(A)) = 2k$$
and
$$\codim_{\Kum_{n-1}(A)}(D_k \cap \Kum_{n-1}(A)) = k.$$
\end{lemma}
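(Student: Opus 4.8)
The plan is to reduce both equalities to elementary facts about addition maps of abelian varieties, after stratifying the big diagonal by the ``repetition type'' of a point; I would treat the two statements in turn, the second building on the first. For the first equality, I would stratify $\Delta_k$ as follows. For a partition $P$ of the index set $\{1,\dots,n\}$ into exactly $n-k$ blocks, let $Z_P\subset A^n$ be the abelian subvariety cut out by requiring $a_i=a_j$ whenever $i,j$ lie in a common block; then $Z_P\cong A^{n-k}$, and $\Delta_k=\bigcup_P Z_P$ (any tuple with at most $n-k$ distinct entries lies in such a $Z_P$, obtained by refining the partition ``by value'' to exactly $n-k$ blocks). The key observation is that under the identification $Z_P\cong A^{n-k}$ the restriction $\varepsilon|_{Z_P}$ becomes the homomorphism $(c_B)_B\mapsto\sum_B |B|\,c_B$, which is surjective onto $A$ because already multiplication by $|B_0|$ is an isogeny of $A$ for any single block $B_0$. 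Hence $Z_P\cap K_{n-1}(A)=\Ker(\varepsilon|_{Z_P})$ has pure dimension $2(n-k)-2$ for every such $P$, and taking the union gives $\dim\bigl(\Delta_k\cap K_{n-1}(A)\bigr)=2(n-1-k)$, i.e.\ codimension $2k$ in $K_{n-1}(A)$. (For $k\ge n$ the locus $\Delta_k$ is empty and there is nothing to prove.)

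For the second equality, I would pass through the Hilbert--Chow morphism $\rho\colon A^{[n]}\to\Sym^n A$ and use that $\Sigma$ factors as $\Sigma=\bar\varepsilon\circ\rho$, where $\bar\varepsilon\colon\Sym^n A\to A$ adds up a $0$-cycle. Setting $\overline K\coloneqq\bar\varepsilon^{-1}(0)=K_{n-1}(A)/\mathfrak S_n\subset\Sym^n A$ and letting $S_j\subset\Sym^n A$ be the closed locus of cycles supported on at most $n-j$ points, one has $\Kum_{n-1}(A)=\rho^{-1}(\overline K)$ and $D_k\cap\Kum_{n-1}(A)=\rho^{-1}(S_k\cap\overline K)$. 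The finite quotient map $A^n\to\Sym^n A$ restricts to a finite surjection $\Delta_j\cap K_{n-1}(A)\thra S_j\cap\overline K$ (lift a cycle to the tuple repeating each point according to its multiplicity), so the first equality gives $\dim(S_j\cap\overline K)=2(n-1-j)$. Then I would stratify by the open strata $T_j\subset S_j$ of cycles supported on \emph{exactly} $n-j$ points: over $T_j$ the morphism $\rho$ has all fibres of dimension exactly $j$, since the fibre over $\sum_i m_i p_i$ is the product $\prod_i\Hilb^{m_i}(A)_{p_i}$ of punctual Hilbert schemes, which is irreducible of dimension $\sum_i(m_i-1)=n-(n-j)=j$ (a classical fact on punctual Hilbert schemes of smooth surfaces; equivalently this is the semismallness of the crepant resolution $\rho$). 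Hence $\dim\rho^{-1}(T_j\cap\overline K)=\dim(T_j\cap\overline K)+j\le 2(n-1-j)+j=2(n-1)-j$, with equality for $j=k$ because the top-dimensional part of $S_k\cap\overline K$ is not contained in $S_{k+1}\cap\overline K$ (which has dimension $2(n-1-k)-2$). Taking the maximum over $j\ge k$ yields $\dim\bigl(D_k\cap\Kum_{n-1}(A)\bigr)=2(n-1)-k$, hence codimension $k$ in $\Kum_{n-1}(A)$.

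I do not expect a serious obstacle: the only non-formal inputs are the (trivial) surjectivity of multiplication-by-$m$ on an abelian variety and the dimension of punctual Hilbert schemes. The one point genuinely requiring care is showing that the claimed top dimensions are \emph{attained} and not smaller; for this I would exhibit explicit maximal strata — the subvarieties $Z_P$ with $\#P=n-k$ in the first part, and cycles of the shape $(k+1)p_0+p_1+\dots+p_{n-k-1}$ with $\sum p_i=0$ in the second — and check directly that their images and $\rho$-preimages have the asserted dimension. One should also be mindful of the (vacuous) ranges $k\ge n$, where all the loci in question are empty.
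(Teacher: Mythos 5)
Your proposal is correct and follows essentially the same route as the paper: the paper declares the first equality ``clear'' (your partition-stratification of $\Delta_k$ and the surjectivity of $\varepsilon|_{Z_P}$ is precisely the implicit argument) and derives the second from the dimensions of the fibres of the Hilbert--Chow morphism over the strata of the diagonal, citing Brian\c{c}on's result on punctual Hilbert schemes, exactly as you do. Your write-up merely supplies the details the paper leaves to the reader.
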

\begin{proof}
The first statement is clear. The second statement follows from the description of the fibers of the Hilbert--Chow morphism over the images of $\Delta_k$ in \(A^{(n)}\), cf.\ \cite[III.3]{Briancon1977}.  
\end{proof}

Let $U \coloneqq K_{n-1}(A) \cap (A^n\setminus \Delta_2) \subset A^n$ and $V \coloneqq \Kum_{n-1}(A) \cap (A^{[n]} \setminus D_2) \subset A^{[n]}$.
For codimension reasons, avoiding the smaller diagonals does not change $H^3(-, \mathbb Z)_{\tors}$:
\begin{lemma}\label{lem:torsvkum}
    We have
    $H^3(V, \mathbb Z)_{\tors} \simeq H^3(\Kum_{n-1}(A), \mathbb Z)_{\tors}.$
\end{lemma}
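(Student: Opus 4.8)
The plan is to compare $H^3$ of $\Kum_{n-1}(A)$ with that of its open subset $V$ via the long exact sequence of local cohomology for the closed complement
$$Z \coloneqq D_2 \cap \Kum_{n-1}(A) = \Kum_{n-1}(A) \setminus V.$$
By the preceding lemma, $Z$ has codimension $2$ in the complex manifold $\Kum_{n-1}(A)$, so in particular every irreducible component of $Z$ has complex codimension $\ge 2$.

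First I would invoke the purity-type vanishing $H^k_Z(\Kum_{n-1}(A),\Z)=0$ for $k<4$, valid for any closed analytic subset of codimension $\ge 2$ in a complex manifold. The cleanest way to see this — and the way that makes the possible singularity and non-equidimensionality of $Z$ harmless — is the identification $H^k_Z(X,\Z)\cong H^{\mathrm{BM}}_{2\dim_{\mathbb C}X-k}(Z,\Z)$ for $X$ a complex manifold, which vanishes as soon as $2\dim_{\mathbb C}X-k>2\dim_{\mathbb C}Z$, i.e.\ as soon as $k<2\codim_X Z$. Plugging $\codim = 2$ into
$$\cdots \to H^k_Z(\Kum_{n-1}(A)) \to H^k(\Kum_{n-1}(A)) \to H^k(V) \to H^{k+1}_Z(\Kum_{n-1}(A)) \to \cdots$$
shows that restriction $H^3(\Kum_{n-1}(A),\Z)\to H^3(V,\Z)$ is injective, its kernel being the image of $H^3_Z(\Kum_{n-1}(A))=0$.

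To upgrade injectivity to an isomorphism on torsion, I would use the same Borel–Moore identification once more: since $\dim_{\mathbb C}Z=\dim_{\mathbb C}\Kum_{n-1}(A)-2$, the group $H^4_Z(\Kum_{n-1}(A),\Z)\cong H^{\mathrm{BM}}_{2\dim_{\mathbb C}\Kum_{n-1}(A)-4}(Z,\Z)$ is the \emph{top}-degree Borel–Moore homology of $Z$, hence free abelian — one generator for each codimension-$2$ irreducible component of $Z$, and no torsion. By exactness of the sequence above, the cokernel of the injection $H^3(\Kum_{n-1}(A),\Z)\hookrightarrow H^3(V,\Z)$ embeds into this torsion-free group, so it is itself torsion-free. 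Consequently every torsion class in $H^3(V,\Z)$ already lies in the image of $H^3(\Kum_{n-1}(A),\Z)$, and restriction induces the desired isomorphism $H^3(\Kum_{n-1}(A),\Z)_{\tors}\xrightarrow{\ \sim\ }H^3(V,\Z)_{\tors}$.

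There is no genuine obstacle here; the only point demanding care is that $Z$ need not be smooth or of pure dimension, so one should not appeal to the Gysin sequence of a smooth divisor but rather to the local-cohomology/Borel–Moore formulation, which handles arbitrary closed subsets and keeps the codimension bookkeeping transparent. (Equivalently, one could stratify $Z$ into smooth locally closed subvarieties and induct, arriving at the same two inputs: $H^k_Z(\Kum_{n-1}(A))=0$ for $k\le 3$ and $H^4_Z(\Kum_{n-1}(A))$ torsion-free.)
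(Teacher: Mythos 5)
Your argument is correct, and it rests on exactly the same two inputs as the paper's proof: vanishing of the obstruction group in degree $3$ and torsion-freeness of the obstruction group in degree $4$ for the codimension-$2$ closed complement $Z = D_2 \cap \Kum_{n-1}(A)$. The difference is purely in packaging. The paper stratifies $Z_0 = D_2 \supset Z_1 \supset \cdots$ by successive singular loci, removes the deeper (codimension $\geq 3$) strata first — which leaves $H^3$ untouched by the Thom isomorphism — and then removes the smooth open stratum $Z_0 \setminus Z_1$ of codimension $2$, landing in the exact sequence $0 \to H^3(\Kum_{n-1}(A)\setminus Z_1,\Z) \to H^3(V,\Z) \to \Z^{\pi_0(Z_0\setminus Z_1)}$ whose last term is free; you instead treat the possibly singular $Z$ in one step via $H^k_Z(X,\Z)\cong H^{\mathrm{BM}}_{2\dim_{\C}X-k}(Z,\Z)$, getting $H^3_Z=0$ and $H^4_Z$ free (top-degree Borel--Moore homology, one generator per top-dimensional component). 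Your route is marginally cleaner in that it never needs the stratification or the observation that the singular loci have strictly larger codimension, while the paper's route stays closer to the elementary relative-cohomology toolkit it cites from Voisin's book; as you note yourself, the two are interchangeable, and there is no gap in either.
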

\begin{proof}
We consider the stratification 
\[D_{2}= Z_0 \supset Z_1\supset \cdots \]
where \(Z_{i+1}\) is the singular locus of \(Z_i\). By the long exact sequence of relative cohomology and the Thom isomorphism, we have
\[H^3(\Kum_{n-1}\setminus Z_{1},\Z)\simeq H^3(\Kum_{n-1}\setminus Z_{2},\Z) \simeq \dots \simeq H^3(\Kum_{n-1}(A), \mathbb Z),\]
see \cite[Lemma 11.13]{Voisin_book_I}. By the same argument we also have the exact sequence 
$$ 0\to H^3(\Kum_{n-1} \setminus Z_1,\mathbb Z) \to H^3( \Kum_{n-1}(A) \setminus Z_0, \mathbb Z) \to \mathbb Z^{\pi_0(Z_0 \setminus Z_1)}.$$
Thus 
\[H^3(\Kum_{n-1}(A),\Z)_{\tors}\simeq H^3(\Kum_{n-1}(A)\setminus Z_0,\Z)_{\tors}.\]
Since $V = \Kum_{n-1}(A) \setminus Z_0$, we conclude.
\end{proof}
Since the alternating group acts freely on $U$, the relation between the quotient $U / \alt_n$ and the open subset $V \subset \Kum_{n-1}(A)$ can be made precise as follows:
\begin{lemma}\label{lem:doublecover_V}
There exists a ramified double cover  
    $$\pi \colon \Bl_{\overline{\Delta}} (U/\alt_n)  \to V,$$
    where \(\overline{\Delta}\) is the image in \(U/\alt_n\) of \(\Delta\subset U\).
\end{lemma}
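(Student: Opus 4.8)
The plan is to construct $\pi$ explicitly and exhibit the double-cover structure rather than invoke an abstract descent statement. First I would unwind what $V$ parametrizes. A point of $V = \Kum_{n-1}(A)\setminus Z_0$ is a length-$n$ subscheme $Z\subset A$ with $\Sigma(Z)=0$ whose support has exactly $n-1$ points, so $Z$ consists of $n-2$ reduced points together with one length-$2$ subscheme at a remaining point; equivalently $Z$ is determined by a set of $n-1$ distinct points $\{p_1,\dots,p_{n-1}\}$ summing to $0$ together with the choice of one of them, say $p_j$, and a tangent direction at $p_j$ (a point of $\mathbb{P}(T_{p_j}A)\cong\mathbb{P}^1$). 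On the other side, $U = K_{n-1}(A)\cap(A^n\setminus\Delta_2)$ consists of tuples $(a_1,\dots,a_n)$ with $\sum a_i=0$ and exactly one coinciding pair, and $\alt_n$ acts freely on $U$; so a point of $U/\alt_n$ is the data of such a tuple up to even permutation. The honest full symmetric quotient $U/\mathfrak{S}_n$ is naturally $V\setminus D'$ where $D'$ is the locus where the tangent direction is omitted, i.e.\ there is a residual $\mathbb{Z}/2$ coming from $\mathfrak{S}_n/\alt_n$ acting by swapping the two equal coordinates, which has no effect on the \emph{unordered} support but does distinguish the two points of the double cover $U/\alt_n\to U/\mathfrak{S}_n$ away from $\Delta$, and which should be resolved after blowing up.

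Concretely, I would proceed as follows. Let $\overline{\Delta}\subset U/\alt_n$ be the image of the pairwise-coincidence locus $\Delta = \Delta_1\setminus\Delta_2$ inside $U$. The involution $\iota$ on $U/\alt_n$ induced by the transposition exchanging the two equal coordinates is well-defined (since conjugating by an even permutation sends one transposition to another, but the induced map on the $\alt_n$-quotient is canonical), and its fixed locus is exactly $\overline{\Delta}$: a point is $\iota$-fixed iff swapping the equal pair is realized by an even permutation, which happens precisely when $a_i=a_j$ \emph{and} we are allowed to compose with a $3$-cycle, i.e.\ on all of $\overline{\Delta}$ since we are in $U$ where the only coincidence is that single pair. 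Wait — I should be careful: on $U$ the only repetition is one pair, so the transposition swapping them is \emph{not} even, hence $\iota$ acts nontrivially on every point of $U/\alt_n$ including those over $\overline\Delta$; the subtlety is rather that $\overline\Delta\to\Delta/\mathfrak{S}_{n-2}\times$(diagonal) is where the quotient by $\iota$ acquires ramification because the two sheets come together. Thus $\Bl_{\overline{\Delta}}(U/\alt_n)$ carries a lift $\tilde\iota$ of $\iota$ whose fixed locus is the exceptional divisor, and I would show $\Bl_{\overline{\Delta}}(U/\alt_n)/\tilde\iota \cong V$, with $\pi$ the composite $\Bl_{\overline{\Delta}}(U/\alt_n)\to \Bl_{\overline{\Delta}}(U/\alt_n)/\tilde\iota\xrightarrow{\sim} V$, which is then a ramified double cover with branch locus the image of the exceptional divisor (which is exactly $D = D_1\setminus D_2\subset V$).

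The key steps in order: (1) identify $U/\mathfrak{S}_n$ with the open subset $V^\circ\subset V$ where the length-$2$ subscheme is replaced by a bare double point without tangent data — more precisely, $V$ maps to $\operatorname{Sym}^n A\cap\Sigma^{-1}(0)$ and $U/\mathfrak{S}_n$ is the fiber product of $V$ with the non-tangent-enhanced locus, i.e.\ $U/\mathfrak{S}_n$ is the image of $U$ in $A^{(n)}$ intersected with the kernel of summation; (2) recognize that $V\to (A^{(n)}\cap\Sigma^{-1}(0))\setminus(\text{deeper strata})$ is the blow-up along the image of $\Delta$, by the local description of the Hilbert–Chow morphism over $\Delta\setminus\Delta_2$ as in \cite[III.3]{Briancon1977} — étale-locally near such a point $A^{[n]}$ looks like $A^{n-2}\times A^{[2]}$ and $A^{[2]}\to \operatorname{Sym}^2 A$ is the blow-up of the diagonal; (3) conclude that $\Bl_{\overline\Delta}(U/\alt_n)$, being the blow-up of a $\mathbb{Z}/2$-cover of that base along the preimage of the blown-up center, maps to $V$ via the universal property, and check this map is finite of degree $2$, étale away from the exceptional divisor (where $\alt_n$ acts freely on $U$, so $U\to U/\alt_n\to U/\mathfrak{S}_n$ is étale of degree $2$) and ramified to order $2$ along it.

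The main obstacle I anticipate is step (2)–(3): making the blow-up identification canonical and global rather than merely étale-local. One must check that the $\mathbb{Z}/2$-action and the blow-up center are compatible with the étale-local product structure $A^{n-2}\times A^{[2]}$ in a way that patches, and that the involution $\iota$ on $U/\alt_n$ genuinely corresponds under the Hilbert–Chow picture to the deck transformation of $A^{[2]}\to\operatorname{Sym}^2 A$ (which swaps... nothing, since $A^{[2]}\to\operatorname{Sym}^2A$ is birational, not a double cover — so the double cover must come from $\alt_n$ vs $\mathfrak S_n$ globally and one has to see that after blow-up the extra sheet and the exceptional divisor of $A^{[2]}$ match up). Handling the summation constraint $\Sigma=0$ throughout (so that everything stays inside $K_{n-1}$ and $\Kum_{n-1}$, and the codimensions are as in the preceding lemma) is routine but needs to be tracked carefully so that $\overline\Delta$ really is a smooth center in the smooth variety $U/\alt_n$.
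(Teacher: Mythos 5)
Your overall route is the paper's: identify $V$ with $(\Bl_{\Delta} U)/\mathfrak S_n$ (Beauville's Section~8 identification, which you re-derive from the local Hilbert--Chow picture of Brian\c{c}on), commute the blow-up of $\Delta$ with the free $\mathfrak A_n$-quotient to get $\Bl_{\overline{\Delta}}(U/\mathfrak A_n)\simeq(\Bl_{\Delta} U)/\mathfrak A_n$, and let the residual $\mathfrak S_n/\mathfrak A_n\simeq\mathbb Z/2\mathbb Z$ furnish the degree-two map to $V$. This is exactly the paper's three-line argument, carried out with more explicit local coordinates.

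However, your ``Wait --- I should be careful'' self-correction introduces a genuine error. The residual involution $\iota$ on $U/\mathfrak A_n$ \emph{does} fix $\overline{\Delta}$ pointwise: for $u=(a_1,\dots,a_n)\in\Delta\cap U$ with $a_i=a_j$, the transposition $(i\,j)$ fixes $u$ itself, so $[\tau u]=[u]$ in $U/\mathfrak A_n$ for any transposition $\tau$ (all odd permutations induce the same involution on the $\mathfrak A_n$-quotient, since any two differ by an even one). The parity of $(i\,j)$ is beside the point; what matters is that the $\mathfrak S_n$-stabilizer of $u$ contains an odd element, so the full $\mathfrak S_n$-orbit is a single $\mathfrak A_n$-orbit and the two sheets of $U/\mathfrak A_n\to U/\mathfrak S_n$ collide there. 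Your retraction (``$\iota$ acts nontrivially on every point of $U/\mathfrak A_n$ including those over $\overline{\Delta}$'') asserts in effect that $\iota$ is free, which would make the quotient \'etale and contradict the ``ramified'' in the statement; it also leaves unsupported your later claim that the lift $\tilde\iota$ fixes the exceptional divisor, which holds precisely because $\iota$ fixes $\overline{\Delta}$ pointwise and acts by $-1$ on its normal bundle, hence trivially on the projectivized normal bundle. Your first instinct was correct; delete the retraction and the argument closes.
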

\begin{proof}
By \cite[Sec.\ 8, p.\ 770]{beauville}, we have
\[(\Bl_{\Delta} U )/ \mathfrak S_n \simeq V\]
Moreover, since $\mathfrak A_n$ acts freely on $U$ and stabilizes $\Delta$, we have
$$\Bl_{\Delta}(U) / \mathfrak A_n \simeq \Bl_{\overline{\Delta}}(U / \mathfrak A_n).$$
The action of \(\Z/2\Z\simeq\mathfrak{S}_n/\alt_n\) induced onto the quotient \(\Bl_{\Delta}(U) / \mathfrak A_n \) produces a double cover
 $$\Bl_{\overline\Delta}(U / \mathfrak A_n) \simeq (\Bl_{\Delta} U ) / \mathfrak A_n \to (\Bl_{\Delta} U) / \mathfrak S_n \simeq V,$$
 which is what we wanted.
 \end{proof}

\begin{proposition}\label{prop:spectral_sequence_argument}
    If there are integers $N_p$ for $1 \leq p \leq 3$ such that $$N_p \cdot H^p(\mathfrak A_n, H^{3-p}(K_{n-1}(A), \mathbb Z)) = 0,$$ then we have
    $$2N_1N_2N_3 \cdot H^3(\Kum_{n-1}(A), \mathbb Z)_{\tors} = 0.$$
\end{proposition}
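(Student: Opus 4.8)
The plan is to play off the ramified double cover $\pi\colon W\to V$ of Lemma~\ref{lem:doublecover_V}, where $W\coloneqq\Bl_{\overline{\Delta}}(U/\alt_n)$, against the Cartan--Leray spectral sequence of the free $\alt_n$-action on $U$, monitoring torsion at each step.

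\emph{Reduction to $W$.} Both $V$ and $W$ are smooth quasi-projective of complex dimension $2(n-1)$: $V$ is open in the hyperkähler manifold $\Kum_{n-1}(A)$, the quotient $U/\alt_n$ is smooth since $\alt_n$ acts freely on $U$, the center $\overline{\Delta}$ is smooth since $\Delta$ is a smooth closed $\alt_n$-stable subvariety of $U$, and blowing up a smooth center preserves smoothness. As $\pi$ is finite of degree $2$ it is proper, so Poincaré--Lefschetz duality on these oriented (non-compact) manifolds furnishes a Gysin map $\pi_*\colon H^3(W,\Z)\to H^3(V,\Z)$ with $\pi_*\circ\pi^*=2\cdot\id$. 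Hence if an integer $M$ kills $H^3(W,\Z)_{\tors}$, then any torsion class $x\in H^3(V,\Z)$ satisfies $2Mx=\pi_*(M\,\pi^*x)=0$; together with Lemma~\ref{lem:torsvkum} this reduces the proposition to showing that $N_1N_2N_3$ kills $H^3(W,\Z)_{\tors}$.

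\emph{Stripping off the blow-up and running the spectral sequence.} Since $\overline{\Delta}$ has complex codimension $2$ in $U/\alt_n$, the blow-up formula gives $H^3(W,\Z)\simeq H^3(U/\alt_n,\Z)\oplus H^1(\overline{\Delta},\Z)$, and as $H^1(-,\Z)$ is always torsion-free this yields $H^3(W,\Z)_{\tors}\simeq H^3(U/\alt_n,\Z)_{\tors}$. Now consider the Cartan--Leray spectral sequence $E_2^{p,q}=H^p(\alt_n,H^q(U,\Z))\Rightarrow H^{p+q}(U/\alt_n,\Z)$. The set $\Delta_2\cap K_{n-1}(A)$ removed from $K_{n-1}(A)$ to form $U$ has real codimension $4$, so restriction induces isomorphisms $H^q(K_{n-1}(A),\Z)\simeq H^q(U,\Z)$ for $q\le2$; thus $E_2^{p,3-p}=H^p(\alt_n,H^{3-p}(K_{n-1}(A),\Z))$ is killed by $N_p$ for $1\le p\le3$. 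In the abutment filtration $0=F^4\subseteq F^3\subseteq F^2\subseteq F^1\subseteq F^0=H^3(U/\alt_n,\Z)$, each graded piece $F^p/F^{p+1}=E_\infty^{p,3-p}$ with $p\ge1$ is a subquotient of one of those groups, so $F^1$ is killed by $N_1N_2N_3$, while $H^3(U/\alt_n,\Z)/F^1=E_\infty^{0,3}$ embeds into $E_2^{0,3}=H^3(U,\Z)^{\alt_n}$. Consequently, provided $H^3(U,\Z)$ is torsion-free, $E_\infty^{0,3}$ is torsion-free, every torsion class of $H^3(U/\alt_n,\Z)$ lies in $F^1$ and is annihilated by $N_1N_2N_3$, and we are done.

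\emph{The crux: $H^3(U,\Z)$ is torsion-free.} Write $K\coloneqq K_{n-1}(A)$, so $U=K\setminus Z$ with $Z=\Delta_2\cap K$; here $K\simeq A^{n-1}$ has torsion-free cohomology and $Z$ has complex codimension $2$, so $\dim_{\R}Z\le 4n-8$. The Gysin sequence $H^3_Z(K)\to H^3(K)\to H^3(U)\to H^4_Z(K)\to H^4(K)$, combined with $H^j_Z(K)\simeq H^{\mathrm{BM}}_{4n-4-j}(Z)$, shows that $H^3_Z(K)=H^{\mathrm{BM}}_{4n-7}(Z)=0$ (so $H^3(K)\hookrightarrow H^3(U)$), while $H^4_Z(K)=H^{\mathrm{BM}}_{4n-8}(Z)$ is the top-degree Borel--Moore homology of $Z$, hence free abelian (on the top-dimensional irreducible components of $Z$). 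Therefore $H^3(U)$ is an extension of a subgroup of a free abelian group by a torsion-free group, so it is torsion-free. I expect this last point to be the main obstacle: $Z$ is singular and reducible, so there is no naive Thom isomorphism available — the rescue is that only the top Borel--Moore homology of $Z$ enters, and that is automatically free.
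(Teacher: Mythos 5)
Your proof is correct and follows essentially the same route as the paper: the Cartan--Leray spectral sequence for the free $\alt_n$-action with the top graded piece $E_\infty^{0,3}$ shown torsion-free, blow-up invariance of $H^3(-,\Z)_{\tors}$, and the degree-$2$ transfer $\pi_*\pi^*=2\cdot\id$ for the ramified double cover (which the paper cites from Aguilar--Prieto rather than constructing via Poincar\'e--Lefschetz duality). One harmless slip: $\Delta_2\cap K_{n-1}(A)$ has complex codimension $4$, not $2$, which only strengthens your Borel--Moore argument --- in fact it gives $H^q(U,\Z)\simeq H^q(K_{n-1}(A),\Z)$ for all $q\le 6$, so the torsion-freeness of $H^3(U,\Z)$ is immediate.
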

\begin{proof}
Since the action of $\mathfrak A_n$ on $U$ is free, there is the Cartan--Leray spectral sequence
$$H^p(\mathfrak A_n, H^q(U, \mathbb Z)) \Rightarrow H^{p+q}(U / \mathfrak A_n, \mathbb Z).$$
Note that for $p \leq 6$, by the same argument as in Lemma \ref{lem:torsvkum} we have $H^p(U, \mathbb Z) \simeq H^p(K_{n-1}(A), \mathbb Z)$, as the complement of $U \subset K_{n-1}(A)$ is of codimension four. In particular, $H^0(\mathfrak A_n, H^3(U, \mathbb Z)) \simeq H^3(K_{n-1}(A))^{\mathfrak A_n}$ is torsion-free. 
It then follows that $H^3(U / \mathfrak A_n, \mathbb Z)_{\tors}$ is an extension of subquotients of $H^{p}(\mathfrak A_n, H^{3-p}(K_{n-1}(A, \mathbb Z)))$ for $1 \leq p \leq 3$, which implies that  \(N_1 N_2  N_3 \cdot H^3(U / \mathfrak A_n, \mathbb Z)_{\tors} = 0\) .
As blowing-up smooth subvarieties leaves $H^3(-, \mathbb Z)_{\tors}$ invariant, we have
$$H^3(\Bl_{\overline{\Delta}}(U / \mathfrak A_n), \mathbb Z)_{\tors} \simeq H^3(U / \mathfrak A_n, \mathbb Z)_{\tors}.$$
By applying \cite[Thm.\ 5.4]{aguilarprieto} to the double cover of Lemma \ref{lem:doublecover_V} $$\pi \colon \Bl_{\overline\Delta}(U / \mathfrak A_n) \to  V,$$
we have that \(\pi_* \pi^* \alpha =2 \alpha\) for any \(\alpha \in H^*(V,\Z)\), which allows us to conclude by Lemma \ref{lem:torsvkum}.
\end{proof}

\section{Group cohomology}

The aim of this section is to prove the following:
\begin{proposition}\label{prop:spectralval}
    
The cohomology groups \(H^p(\alt_n,H^{3-p}(K_{n-1}(A), \mathbb Z))\) for \(p=1,2,3\) and \(n\geq 3\) are given by
\begin{center}

\begin{tabular}{ |c|c|c|c|c|c|c| } 
 \hline
 $p \backslash n$ & 3 & 4 & 5 & 6 & 7 & $\geq 8$ \\
 \hline
 1 & $0$ & $(\mathbb Z / 4 \mathbb Z)^{4} \oplus (\mathbb Z / 2 \mathbb Z)^6$ & $(\mathbb Z / 2 \mathbb Z)^4 $  &  $(\mathbb Z / 2 \mathbb Z)^4$ & $(\mathbb Z / 2 \mathbb Z)^4$ & $(\mathbb Z / 2 \mathbb Z)^4$\\
 \hline
 2 & $0$ & $(\mathbb Z / 2 \mathbb Z)^4$   & $(\mathbb Z / 3 \mathbb Z)^4$  & $(\mathbb Z / 3 \mathbb Z)^4$  & $ 0 $ & $0$    \\ 
 \hline
 3 & $0$ & $\mathbb Z  / 2 \mathbb Z$ & $\mathbb Z / 2 \mathbb Z$ & $\mathbb  Z/ 6 \mathbb Z$ & $\mathbb Z / 6 \mathbb Z$ & $\mathbb Z / 2 \mathbb Z$ \\ 
 \hline
 \end{tabular}
\end{center}
\vspace{10pt}
In particular, the above groups are \(2\)-torsion for \(n\geq 8\) and \(12\)-torsion for \(n<8\).
\end{proposition}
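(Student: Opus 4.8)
The plan is to compute the three cohomology groups $H^p(\alt_n, H^{3-p}(K_{n-1}(A),\Z))$ for $p=1,2,3$ by first understanding the $\alt_n$-module structure of the low-degree integral cohomology of $K_{n-1}(A)\simeq A^{n-1}$, and then feeding this into a direct computation of group cohomology. Recall that $K_{n-1}(A)=\ker(A^n\to A)$, so as a $\mathfrak S_n$-representation we have $H^1(K_{n-1}(A),\Z)\simeq (H^1(A,\Z)^{\oplus n})/H^1(A,\Z)$, i.e.\ $H^1(A,\Z)\ot_\Z M_{n-1}$ where $M_{n-1}$ is the standard $(n-1)$-dimensional integral permutation module (the quotient of the permutation module $\Z^n$ by the diagonal). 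Since $H^1(A,\Z)\simeq\Z^4$ with trivial $\mathfrak S_n$-action and $H^*(A^{n-1},\Z)$ is the exterior algebra on $H^1$, we get $H^q(K_{n-1}(A),\Z)\simeq \bigwedge^q(H^1(A,\Z)\ot M_{n-1})$ as $\alt_n$-modules. In particular $H^0$ is trivial $\Z$, $H^1\simeq \Z^4\ot M_{n-1}$, and $H^2\simeq \bigwedge^2(\Z^4\ot M_{n-1})$, which decomposes (over $\Z$, using the $\GL_4\times\GL_{n-1}$ Cauchy-type formula, being slightly careful with the integral structure) into a sum of $\Sym^2\Z^4\ot\bigwedge^2 M_{n-1}$ and $\bigwedge^2\Z^4\ot\Sym^2 M_{n-1}$.

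Having reduced to computing $H^p(\alt_n, \bigwedge^{3-p}(\Z^4\ot M_{n-1}))$, the next step is to split off the (constant, torsion-free) $\Z^4$-tensor factors so that everything is expressed in terms of the group cohomology of $\alt_n$ with coefficients in the modules $\Z$, $M_{n-1}$, $\bigwedge^2 M_{n-1}$ and $\Sym^2 M_{n-1}$ — all standard integral representations built from the permutation module. For these one can use several tools: the known computation of $H^*(\mathfrak S_n,\Z)$ and $H^*(\alt_n,\Z)$ in low degrees, the relation between permutation module cohomology and the cohomology of Young subgroups via Shapiro's lemma (e.g.\ $H^*(\alt_n, \Z[\alt_n/\alt_{n-1}])$ relates to $H^*(\alt_{n-1},\Z)$, and similarly $\bigwedge^2$ and $\Sym^2$ of the permutation module relate to induced modules from $\mathfrak S_2\times\mathfrak S_{n-2}$ type subgroups), and the transfer argument which kills all $\ell$-torsion for primes $\ell\nmid |\alt_n|$ and more precisely bounds the $\ell$-torsion in terms of the $\ell$-Sylow. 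Once the table entries are established, the final sentence is immediate: for $n\geq 8$ every entry is $\Z/2\Z^{\oplus\bullet}$, hence $2$-torsion, and for $3\le n\le 7$ the exponents appearing are $1,2,3,4,6,12$ — dividing $12$ — so the groups are $12$-torsion. (Concretely: $\mathrm{lcm}$ of the orders $4,2,3,6$ appearing for $n<8$ is $12$.)

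The main obstacle I expect is the genuinely integral, rather than rational or mod-$p$, nature of the computation at the small primes $2$ and $3$, especially for the borderline cases $n=4,5,6,7$ where the answer is not uniform. Rational coefficients give nothing (all these $H^p$ vanish after $\ot\Q$ since $\alt_n$ is finite), so one must track $2$- and $3$-torsion carefully; the integral decomposition of $\bigwedge^2(\Z^4\ot M_{n-1})$ is not simply the sum of the two pieces above but may involve a subtle extension or a non-split lattice, and the modules $\bigwedge^2 M_{n-1}$, $\Sym^2 M_{n-1}$ are not projective, so their $\alt_n$-cohomology is nonzero and must be computed by hand, e.g.\ via the Lyndon--Hochschild--Serre spectral sequence for $\alt_n\supset (\mathfrak S_2\times\mathfrak S_{n-2})\cap\alt_n$ or by explicit resolutions for small $n$ (likely assisted by machine computation, which the acknowledgements to Nick Addington suggest was done). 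The appearance of $3$-torsion precisely for $n=5,6$ in the $p=2$ row, and the jump to $\Z/6\Z$ for $n=6,7$ in the $p=3$ row, must be pinned down by identifying which Sylow-$3$ subgroup contributions survive — this is where I would be most careful and would cross-check against the stable value ($n\geq 8$) via the suspension/stability isomorphisms for symmetric and alternating group (co)homology.

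Once the full table is in hand, the ``in particular'' claims follow by inspection: reading across the $n\geq 8$ column shows only copies of $\Z/2\Z$, and for $n<8$ the only orders of cyclic summands occurring are $2,3,4,6$, all of which divide $12$, so multiplication by $12$ annihilates each of the six groups in that range. This completes the proof of the final statement modulo the table itself.
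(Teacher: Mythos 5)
Your overall strategy coincides with the paper's: identify $H^1(K_{n-1}(A),\Z)$ as $H^1(A,\Z)\otimes N$ with $N=\Z^n/\Z_{\triv}$ the quotient of the permutation module $M=\Z^n$, reduce everything to $H^p(\alt_n,-)$ with coefficients in lattices built from $N$, compute these via Shapiro's lemma and the known low-degree cohomology of alternating groups, and fall back on machine computation for $n\le 7$. But as written this is a plan rather than a proof: for $n\ge 8$ every entry of the table is deferred to ``careful tracking,'' and the two computations that actually produce the answers are not carried out. Concretely, you are missing: (i) for $p=2$, the vanishing of $H^2(\alt_n,N)$, which requires not only $H^2(\alt_n,M)\simeq H^2(\alt_{n-1},\Z_{\triv})=0$ (Shapiro) but also the \emph{injectivity} of the restriction map $H^3(\alt_n,\Z_{\triv})\to H^3(\alt_{n-1},\Z_{\triv})$ for $n\ge 7$ --- a genuine input from Schur's computation of the multipliers of $\alt_n$ that your outline never identifies; without it the connecting map $H^2(\alt_n,N)\to H^3(\alt_n,\Z_{\triv})$ could contribute a nonzero class; (ii) for $p=1$, the identification $\wedge^2 M\simeq\Ind_{H}^{\alt_n}\Z_-$, where $H\simeq\mathfrak S_{n-2}$ is the stabilizer of an unordered pair acting on $\Z_-$ through its sign character. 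It is exactly this sign twist that yields $H^1(\alt_n,\wedge^2 M)\simeq H^1(\Z/2\Z,\Z_-)\simeq\Z/2\Z$ and hence the $(\Z/2\Z)^4$ in the table; your phrase about ``induced modules from $\mathfrak S_2\times\mathfrak S_{n-2}$ type subgroups'' gestures at this but does not pin down the twist, which is the whole point.

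Moreover, the one decomposition you commit to, $\wedge^2(\Z^4\otimes N)\simeq\Sym^2\Z^4\otimes\wedge^2 N\oplus\wedge^2\Z^4\otimes\Sym^2 N$, is precisely where you would get into trouble: over $\Z$ the Cauchy formula involves divided powers rather than $\Sym^2$, and the discrepancy is $2$-torsion --- the prime that governs the answer. The paper sidesteps this entirely by using the elementary, integrally valid splitting $\wedge^2(N^{\oplus 4})\simeq(\wedge^2 N)^{\oplus 4}\oplus(N\otimes N)^{\oplus 6}$, which reduces the problem to $H^1(\alt_n,\wedge^2 N)\simeq\Z/2\Z$ and $H^1(\alt_n,N\otimes N)=0$; the latter holds because $M\otimes M$ is an honest permutation module with two orbits whose stabilizers are $\alt_{n-1}$ and $\alt_{n-2}$, so its $H^1$ vanishes, and one then passes from $M\otimes M$ to $N\otimes N$ (and from $\wedge^2M$ to $\wedge^2N$) by long exact sequences using the already-established vanishing of $H^2(\alt_n,\Z_{\triv})$ and $H^2(\alt_n,N)$. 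I would adopt that splitting and carry out these two reductions explicitly; with those steps in place your outline becomes the paper's argument.
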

The values for $n \leq 7$ have been computed using \cite{GAP4}. In what follows, we give the proof for $n \geq 8$. We start by recalling classical computations of group cohomology with trivial \(\Z\) coefficients, which already settles the case \(p=3\) of Proposition \ref{prop:spectralval}.

\begin{proposition}\label{prop:group_coh_an}
    
The cohomology groups \(H^p(\alt_n,\Z_{\triv})\) for \(p=1,2,3\) and \(n\geq 3\) are given by

\begin{center}

\begin{tabular}{ |c|c|c|c|c|c|c| } 
 \hline
 $p \backslash n$ & 3 & 4 & 5 & 6 & 7 & $\geq 8$\\
 \hline
 1 & $0$ & $0$ & $0$ & $0$ & $0$ & $0$\\
 \hline
 2 & $\mathbb Z / 3 \mathbb Z$ & $\mathbb Z / 3 \mathbb Z$ & $0$  & $0$ & $0$ & $0$  \\ 
 \hline
 3 & $0$ & $\mathbb Z  / 2 \mathbb Z$ & $\mathbb Z / 2 \mathbb Z$ & $\mathbb  Z/ 6 \mathbb Z$ & $\mathbb Z / 6 \mathbb Z$ & $\Z/2\Z$\\ 
 \hline
 \end{tabular}
    
\end{center}
\end{proposition}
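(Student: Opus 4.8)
The plan is to handle the three rows of the table separately, reducing each to a classical fact about the alternating groups. For the row $p=1$: for any finite group $G$ one has $H^1(G,\mathbb{Z}) = \Hom(G,\mathbb{Z}) = 0$ since $\mathbb{Z}$ is torsion-free, so this row vanishes for all $n$. For the row $p=2$: I would use the short exact sequence of trivial modules $0 \to \mathbb{Z} \to \mathbb{Q} \to \mathbb{Q}/\mathbb{Z} \to 0$ together with the vanishing $H^i(G,\mathbb{Q}) = 0$ for $i\ge 1$ (multiplication by $|G|$ is simultaneously zero and invertible on it), which yields $H^2(\alt_n,\mathbb{Z}) \cong H^1(\alt_n,\mathbb{Q}/\mathbb{Z}) = \Hom(\alt_n^{\mathrm{ab}},\mathbb{Q}/\mathbb{Z})$, non-canonically isomorphic to the finite abelian group $\alt_n^{\mathrm{ab}}$. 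It then remains to recall the abelianizations: $\alt_3 \cong \mathbb{Z}/3\mathbb{Z}$; $[\alt_4,\alt_4]$ is the Klein four-group, so $\alt_4^{\mathrm{ab}} \cong \mathbb{Z}/3\mathbb{Z}$; and $\alt_n$ is simple and non-abelian for $n\ge 5$, so $\alt_n^{\mathrm{ab}} = 0$.

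For the row $p=3$, I would identify $H^3(\alt_n,\mathbb{Z})$ with the Schur multiplier. By the universal coefficient theorem $H^3(G,\mathbb{Z}) \cong \Hom(H_3(G,\mathbb{Z}),\mathbb{Z}) \oplus \Ext^1_{\mathbb{Z}}(H_2(G,\mathbb{Z}),\mathbb{Z})$; for $G$ finite all $H_i(G,\mathbb{Z})$ with $i\ge 1$ are finite, so the $\Hom$-summand vanishes and $\Ext^1_{\mathbb{Z}}(H_2(G,\mathbb{Z}),\mathbb{Z}) \cong H_2(G,\mathbb{Z})$. Hence $H^3(\alt_n,\mathbb{Z}) \cong H_2(\alt_n,\mathbb{Z})$, and I would then quote Schur's classical determination of the multiplier of the alternating groups: it is trivial for $n=3$, it is $\mathbb{Z}/2\mathbb{Z}$ for $n\in\{4,5\}$ and for every $n\ge 8$, and it is $\mathbb{Z}/6\mathbb{Z}$ for the two exceptional values $n=6,7$. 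This matches the third row; note moreover that this row of the present proposition coincides with the $p=3$ row of Proposition \ref{prop:spectralval}, since $H^0(K_{n-1}(A),\mathbb{Z})=\mathbb{Z}$ carries the trivial action.

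Since every step is classical, there is no genuine obstacle here; the only point requiring care is the row $p=3$, where the exceptional Schur multipliers $H_2(\alt_6,\mathbb{Z}) \cong H_2(\alt_7,\mathbb{Z}) \cong \mathbb{Z}/6\mathbb{Z}$ contribute the $3$-torsion seen in the table and must be imported as a black box, rather than deduced by the uniform argument that governs $n\ge 8$. Everything else — the vanishing in row $1$, the rational-coefficient computation in row $2$, and the universal-coefficient identification in row $3$ — is purely formal.
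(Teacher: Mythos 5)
Your proof is correct and follows essentially the same route as the paper: reduce $H^p(\alt_n,\Z)$ to the homology groups $H_1=\alt_n^{\mathrm{ab}}$ and $H_2$ (the Schur multiplier) using finiteness of $H_i(\alt_n,\Z)$ for $i\ge 1$, and then quote Schur's classical computation, including the exceptional multipliers $\Z/6\Z$ for $n=6,7$. The only cosmetic difference is that for $p=2$ you pass through the coefficient sequence $0\to\Z\to\Q\to\Q/\Z\to 0$ and Pontryagin duality rather than the universal coefficient theorem directly, which gives the same identification with $\alt_n^{\mathrm{ab}}$.
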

\begin{proof}

The universal coefficient theorem and the observation that the abelian groups $H_p(\alt_n, \mathbb Z)$ are finite for $p > 0$, c.f.\ \cite[Cor.\ 6.5.10]{weibelhomalg}, yields
\[H^{p}(\mathfrak A_{n}, \mathbb Z_{\triv}) \simeq \Ext^1(H_{p-1}(\mathfrak A_{n}, \mathbb Z_{\triv}), \mathbb Z).
\]
For $p = 2$ the statement follows from $H_1(G, \mathbb Z) = G^{ab}$, while for $p = 3$ we conclude by classical computations going back to Schur \cite{schur}, see for example \cite[Ex.\ 6.9.10]{weibelhomalg}. Note that $\Ext^1(\mathbb Z / n\mathbb Z, \mathbb Z) \simeq \mathbb Z / n \mathbb Z$ for $n > 0$.
\end{proof}

\color{black}
\subsection{Some notation}\label{sec_notation}
To make the ensuing computations more clear, let us introduce some notation.
As above, we consider the short exact sequence
$$0 \to K_{n-1}(A) \to A^n \to A \to 0$$
of abelian varieties with an $\mathfrak A_n$-action induced by the permutation action on $A^n$. We obtain a short exact sequence
$$0 \to H^1(A, \mathbb Z) \to H^1(A^n, \mathbb Z) \to H^1(K_{n-1}(A),\Z) \to 0$$
of $\mathfrak A_n$-modules. Note that the sequence is the tensor product of $H^1(A, \mathbb Z)$ with the short exact sequence
$$0 \to \mathbb Z_{\triv} \to M \to N \to 0,$$
where $M \coloneqq \mathbb Z^n$ is equipped with the permutation action and the map $\mathbb Z_{\triv} \to M$ is the diagonal embedding.

\subsection{Shapiro's Lemma}
As we saw in Proposition \ref{prop:group_coh_an}, we have complete control over the cohomology groups of $\alt_n$ with coefficients in the trivial module $\mathbb Z_{\triv}$. The main tool we are going to use to handle cohomology groups with coefficients in other modules is Shapiro's Lemma. 
We give a brief recollection of the statement before continuing with the proof of Proposition \ref{prop:spectralval}.

Suppose that $H$ is a subgroup of $G$ and $W$ is a left $\mathbb Z[H]$-module. Then, the left $\mathbb Z[G]$-module
$$\Ind_{H}^G(W) \coloneqq \mathbb Z[G] \otimes_{\mathbb Z[H]} W$$
is called the induced $G$-module associated to $H$.
\begin{lemma}[Shapiro's Lemma]\label{lem:shapiro}
    Let $H \subset G$ be a subgroup of finite index and $W$ an $H$-module.
    Then 
    $$H^*(G, \Ind_H^G(W)) \simeq H^*(H, W).$$
\end{lemma}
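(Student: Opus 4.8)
The plan is to deduce Shapiro's Lemma from two standard facts about the finite-index inclusion $\Z[H]\hookrightarrow\Z[G]$: first, that $\Z[G]$ is free as a module over $\Z[H]$ (on either side), a basis being given by any system of coset representatives; and second — this is the only place finiteness of the index is used — that $\Ind_H^G(W)=\Z[G]\otimes_{\Z[H]}W$ is naturally isomorphic to the \emph{coinduced} module $\Hom_{\Z[H]}(\Z[G],W)$, the right adjoint of restriction. The first fact is immediate. For the second, one fixes coset representatives and checks that the standard natural comparison map $\Z[G]\otimes_{\Z[H]}W\to\Hom_{\Z[H]}(\Z[G],W)$ is bijective; this is a short, elementary computation, and without the finiteness hypothesis one obtains only this map and not an isomorphism (so the lemma would then hold with the coinduced module in place of $\Ind_H^G(W)$).

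Granting these, the argument is pure homological algebra. Choose a projective resolution $P_\bullet\to\Z$ of the trivial module by $\Z[G]$-modules, so that $H^*(G,-)=\Ext^*_{\Z[G]}(\Z,-)$ is computed by the complex $\Hom_{\Z[G]}(P_\bullet,-)$. Restricting scalars along $\Z[H]\hookrightarrow\Z[G]$ keeps $P_\bullet\to\Z$ exact, since exactness is tested on underlying abelian groups, and turns each $P_i$ — a direct summand of a free $\Z[G]$-module — into a direct summand of a free $\Z[H]$-module, by the first fact above. Hence $P_\bullet\to\Z$ is also a projective resolution over $\Z[H]$, and $H^*(H,W)$ is computed by $\Hom_{\Z[H]}(P_\bullet,W)$. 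Now invoke the tensor–hom adjunction between restriction and coinduction: for every $\Z[G]$-module $P$ there is an isomorphism
$$\Hom_{\Z[G]}\bigl(P,\Hom_{\Z[H]}(\Z[G],W)\bigr)\;\simeq\;\Hom_{\Z[H]}(P,W),$$
natural in $P$. Applying this to the complex $P_\bullet$ and using the identification $\Ind_H^G(W)\simeq\Hom_{\Z[H]}(\Z[G],W)$ from the first paragraph yields an isomorphism of cochain complexes $\Hom_{\Z[G]}(P_\bullet,\Ind_H^G(W))\simeq\Hom_{\Z[H]}(P_\bullet,W)$, and passing to cohomology gives $H^*(G,\Ind_H^G(W))\simeq H^*(H,W)$.

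The only step that is not completely formal is the identification of induction with coinduction when $[G:H]<\infty$; everything else is bookkeeping with projective resolutions and adjoint functors. Since the statement is entirely classical, the alternative to spelling this out is simply to cite a standard reference such as Brown's \emph{Cohomology of Groups} or \cite{weibelhomalg}; I would still include the short argument, as the finite-index subtlety is exactly the hypothesis being invoked.
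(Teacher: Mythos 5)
Your proof is correct and is essentially the same argument as the paper's: the paper simply cites \cite[Lem.\ 6.3.2]{weibelhomalg} (Shapiro's lemma via the restriction--coinduction adjunction) together with \cite[Lem.\ 6.3.4]{weibelhomalg} (the identification $\Ind_H^G(W)\simeq\Hom_{\Z[H]}(\Z[G],W)$ for finite-index subgroups), and your two paragraphs are precisely the content of those two lemmas written out.
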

\begin{proof}
This follows by combining \cite[Lem.\ 6.3.2]{weibelhomalg} and \cite[Lem.\ 6.3.4]{weibelhomalg}.
\end{proof}

For latter use, we recall the following explicit description of induced modules:

\begin{lemma}\label{lem:ind_explicit}
 Let $X$ be a set with a transitive action by a group $G$. Fix $x \in X$ and let $H \coloneqq \Stab_G(x)$ denote the stabilizer of $x$. For each $y \in X$, fix $g_y \in G$ with $g_y \cdot x = y$. Let $W$ be an $H$-module. Then, we have
$$\Ind_H^G(W) \simeq \bigoplus_{y \in X} g_y \otimes W,$$
where $g \in G$ acts on the right hand-side by $g \cdot (g_y \otimes w) = g_{g(y)} \otimes (g_{g(y)}^{-1}gg_y \cdot w)$.
\end{lemma}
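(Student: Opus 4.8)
This is the standard description of an induced module in terms of the orbit it is built from; the point is that the chosen elements $g_y$ serve simultaneously as a bookkeeping device for the points of $X$ and as coset representatives for $H$ in $G$. The plan is to identify $X$ with the coset space $G/H$ via the orbit--stabilizer theorem, use the $g_y$ to exhibit $\Z[G]$ as a free right $\Z[H]$-module, and then read off both the direct-sum decomposition and the $G$-action after applying $- \otimes_{\Z[H]} W$.

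First I would invoke the orbit--stabilizer theorem: since $G$ acts transitively on $X$ and $H = \Stab_G(x)$, the map $G/H \to X$, $gH \mapsto g \cdot x$, is a $G$-equivariant bijection. The condition $g_y \cdot x = y$ then says precisely that $g_y H$ is the coset corresponding to $y$, so $\{g_y\}_{y \in X}$ is a complete and irredundant set of left coset representatives for $H$ in $G$. Consequently $\Z[G] = \bigoplus_{y \in X} g_y \Z[H]$ is free as a right $\Z[H]$-module with basis $\{g_y\}$. Since $- \otimes_{\Z[H]} W$ is additive and carries each rank-one free summand $g_y \Z[H]$ onto $g_y \otimes W$, we obtain the asserted decomposition
$$\Ind_H^G(W) = \Z[G] \otimes_{\Z[H]} W \isom \bigoplus_{y \in X} g_y \otimes W$$
as abelian groups.

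It then remains to compute the left $G$-action inherited from left multiplication on $\Z[G]$. Given $g \in G$ and $y \in X$, write $g(y) \coloneqq g \cdot y$. The element $g g_y$ satisfies $(g g_y) \cdot x = g \cdot y = g(y)$, so it lies in the coset $g_{g(y)} H$; setting $h \coloneqq g_{g(y)}^{-1} g g_y$ one checks $h \cdot x = x$, that is $h \in H$. Moving this factor across the relative tensor product gives
$$g \cdot (g_y \otimes w) = (g g_y) \otimes w = (g_{g(y)} h) \otimes w = g_{g(y)} \otimes (h \cdot w) = g_{g(y)} \otimes (g_{g(y)}^{-1} g g_y \cdot w),$$
which is exactly the formula in the statement.

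Everything here is elementary; the one point that deserves a moment of care is verifying that the correction factor $h = g_{g(y)}^{-1} g g_y$ really lies in $H$, equivalently that the orbit--stabilizer bijection correctly identifies which coset representative $g g_y$ belongs to. Once this is in place, the $\Z[G]$-module structure is forced by the universal property of $- \otimes_{\Z[H]} W$, and no further work is needed.
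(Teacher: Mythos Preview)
Your proof is correct and follows exactly the same approach as the paper: use that the $g_y$ form a set of left coset representatives for $H$ in $G$, hence a basis of $\Z[G]$ as a free right $\Z[H]$-module, and then read off the $G$-action on $\Z[G]\otimes_{\Z[H]}W$ via the factorization $g g_y = g_{g(y)}\cdot h$ with $h=g_{g(y)}^{-1} g g_y\in H$. Your write-up is simply a more detailed version of the paper's argument.
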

\begin{proof}
    Since the action of $G$ on $X$ is transitive, the collection $\{g_y \in \mathbb Z[G] \mid y \in X\}$ forms a basis for the right $H$-module $\mathbb Z[G]$. Thus, we have
    $$\Ind_H^G(W) = \mathbb Z[G] \otimes_{\mathbb Z[H]} W \simeq \bigoplus_{y \in X} g_y \otimes W$$
    with $g(g_y \otimes w) = g_{g(y)} \otimes hw$ for $h = g_{g(y)}^{-1}g g_y \in H$ as claimed.
\end{proof}

If $W = \mathbb Z_{\triv}$ is the trivial $H$-module, the above simplifies as follows:

\begin{corollary}\label{cor:ind_explicit_trivial}
    Let $G$ be a finite group and $X$ a set with a transitive action by $G$. Fix $x \in X$ and let $H \coloneqq \Stab_G(x)$ denote its stabilizer. Then, we have
    $$\Ind^G_H(\mathbb Z_{\triv}) \simeq \mathbb Z[X],$$
    where $\mathbb Z[X]$ is the free $\mathbb Z$-module with basis $\{e_x \mid x \in X\}$ and $G$-action induced by $g(e_x) = e_{g(x)}$. 
\end{corollary}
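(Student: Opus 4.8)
The plan is to obtain the statement as a direct specialization of Lemma \ref{lem:ind_explicit} to the trivial module $W = \mathbb Z_{\triv}$, so that the general formula for the induced $G$-action collapses to the permutation action on $X$.

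First I would apply Lemma \ref{lem:ind_explicit} with $W = \mathbb Z_{\triv}$, using the same fixed point $x \in X$, stabilizer $H = \Stab_G(x)$, and choice of coset representatives $\{g_y\}_{y \in X}$ with $g_y \cdot x = y$. This yields a $\mathbb Z$-module decomposition
$$\Ind_H^G(\mathbb Z_{\triv}) \simeq \bigoplus_{y \in X} g_y \otimes \mathbb Z_{\triv},$$
on which $g \in G$ acts by $g \cdot (g_y \otimes 1) = g_{g(y)} \otimes (g_{g(y)}^{-1} g g_y \cdot 1)$. The one point to observe is that the twisting element $g_{g(y)}^{-1} g g_y$ satisfies $(g_{g(y)}^{-1} g g_y) \cdot x = g_{g(y)}^{-1} g \cdot y = g_{g(y)}^{-1} \cdot g(y) = x$, so it lies in $H$; since $H$ acts trivially on $\mathbb Z_{\triv}$, the action simplifies to $g \cdot (g_y \otimes 1) = g_{g(y)} \otimes 1$.

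Next I would define $\Phi \colon \Ind_H^G(\mathbb Z_{\triv}) \to \mathbb Z[X]$ by sending each generator $g_y \otimes 1$ to the basis vector $e_y$ and extending $\mathbb Z$-linearly. By Lemma \ref{lem:ind_explicit} the family $\{g_y \otimes 1\}_{y \in X}$ is a $\mathbb Z$-basis of the left-hand side, while $\{e_y\}_{y \in X}$ is by definition a $\mathbb Z$-basis of $\mathbb Z[X]$, so $\Phi$ is an isomorphism of abelian groups. It remains to verify $G$-equivariance, which is immediate from the simplified action formula:
$$\Phi\bigl(g \cdot (g_y \otimes 1)\bigr) = \Phi(g_{g(y)} \otimes 1) = e_{g(y)} = g \cdot e_y = g \cdot \Phi(g_y \otimes 1).$$

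There is essentially no serious obstacle here: the result is a formal consequence of Lemma \ref{lem:ind_explicit} once the module is taken to be trivial. The only step requiring a moment of care is recognizing that the twist $g_{g(y)}^{-1} g g_y$ in the general action formula always lands in the stabilizer $H$, which is exactly the place where transitivity of the $G$-action and the choice of coset representatives are used; everything else is bookkeeping on the two free $\mathbb Z$-bases indexed by $X$.
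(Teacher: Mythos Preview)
Your proof is correct and follows exactly the approach the paper intends: the corollary is stated right after Lemma \ref{lem:ind_explicit} with the remark ``If $W = \mathbb Z_{\triv}$ is the trivial $H$-module, the above simplifies as follows'', and no separate proof is given. You have simply spelled out this specialization in detail, including the observation that the twist $g_{g(y)}^{-1} g g_y$ lies in $H$ and hence acts trivially.
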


\subsection{Proof of Proposition \ref{prop:spectralval}: the case \texorpdfstring{${\bf p = 2}$}{(p, q)=(2, 1)} } As the case $p = 3$ was handled by Proposition \ref{prop:group_coh_an}, we proceed with the case $p = 2$.

\begin{lemma}\label{lem:p2} For $n \geq 8,$ we have
 $$H^2(\mathfrak A_{n}, H^1(K_{n-1}(A), \mathbb Z)) = 0.$$
\end{lemma}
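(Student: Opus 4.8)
The plan is to reduce the statement, through Shapiro's Lemma and the long exact sequence of $0\to\mathbb Z_{\triv}\to M\to N\to 0$, to showing that a restriction map between third cohomology groups of alternating groups is nonzero. First, it suffices to prove $H^2(\mathfrak A_n,N)=0$ for $n\geq 8$: by Section~\ref{sec_notation} the $\mathfrak A_n$-module $H^1(K_{n-1}(A),\mathbb Z)$ equals $H^1(A,\mathbb Z)\otimes_{\mathbb Z}N$ with $H^1(A,\mathbb Z)\cong\mathbb Z^4$ carrying the trivial action, so $H^1(K_{n-1}(A),\mathbb Z)\cong N^{\oplus 4}$ as $\mathfrak A_n$-modules, and group cohomology commutes with finite direct sums.

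Next, I would feed $0\to\mathbb Z_{\triv}\to M\to N\to 0$ into $H^\bullet(\mathfrak A_n,-)$. By Corollary~\ref{cor:ind_explicit_trivial}, $M\cong\Ind^{\mathfrak A_n}_{\mathfrak A_{n-1}}\mathbb Z_{\triv}$ where $\mathfrak A_{n-1}$ is the stabilizer of a point of $\{1,\dots,n\}$, so Shapiro's Lemma (Lemma~\ref{lem:shapiro}) gives $H^p(\mathfrak A_n,M)\cong H^p(\mathfrak A_{n-1},\mathbb Z_{\triv})$, under which the map induced by the diagonal inclusion $\mathbb Z_{\triv}\hookrightarrow M$ becomes the restriction $H^p(\mathfrak A_n,\mathbb Z_{\triv})\to H^p(\mathfrak A_{n-1},\mathbb Z_{\triv})$; this is a standard compatibility of Shapiro's isomorphism with restriction, consistent with the composite $\mathbb Z_{\triv}\hookrightarrow M\twoheadrightarrow\mathbb Z_{\triv}$ being multiplication by $n=[\mathfrak A_n:\mathfrak A_{n-1}]$. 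For $n\geq8$ Proposition~\ref{prop:group_coh_an} gives $H^2(\mathfrak A_{n-1},\mathbb Z_{\triv})=0$, so the long exact sequence produces an injection $H^2(\mathfrak A_n,N)\hookrightarrow H^3(\mathfrak A_n,\mathbb Z_{\triv})$ with image the kernel of the restriction $H^3(\mathfrak A_n,\mathbb Z_{\triv})\to H^3(\mathfrak A_{n-1},\mathbb Z_{\triv})$. Since $H^3(\mathfrak A_n,\mathbb Z_{\triv})\cong\mathbb Z/2\mathbb Z$ for $n\geq8$ (Proposition~\ref{prop:group_coh_an}), everything reduces to showing this restriction map is nonzero.

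For this I would pass through the Bockstein sequence of $0\to\mathbb Z\xrightarrow{2}\mathbb Z\to\mathbb Z/2\mathbb Z\to0$: since $H^2(\mathfrak A_m,\mathbb Z_{\triv})=0$ for $m\geq5$ (Proposition~\ref{prop:group_coh_an}), the Bockstein $\beta\colon H^2(\mathfrak A_m,\mathbb Z/2\mathbb Z)\to H^3(\mathfrak A_m,\mathbb Z)$ is injective for $m\geq5$. Hence $\beta$ of the class of a nonsplit central extension of $\mathfrak A_n$ by $\mathbb Z/2\mathbb Z$ generates $H^3(\mathfrak A_n,\mathbb Z)\cong\mathbb Z/2\mathbb Z$; a concrete such extension is the double cover $\widehat{\mathfrak A}_n$ pulled back from $\mathrm{Spin}(n)\to\mathrm{SO}(n)$ along the permutation embedding $\mathfrak A_n\hookrightarrow\mathrm{SO}(n)$. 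This construction is compatible with $\mathrm{SO}(n-1)\subset\mathrm{SO}(n)$, so the restriction of $\widehat{\mathfrak A}_n$ to $\mathfrak A_{n-1}$ is $\widehat{\mathfrak A}_{n-1}$, again nonsplit for $n-1\geq4$ (e.g.\ its restriction to a Klein four-subgroup of an $\mathfrak A_4$ is the quaternion group $Q_8$); since $\beta$ is injective on $H^2(\mathfrak A_{n-1},\mathbb Z/2\mathbb Z)$ and commutes with restriction, the restriction map $H^3(\mathfrak A_n,\mathbb Z)\to H^3(\mathfrak A_{n-1},\mathbb Z)$ is nonzero. This forces $H^2(\mathfrak A_n,N)=0$, proving the lemma.

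The first two paragraphs are bookkeeping; the content is in the last step, i.e.\ understanding the restriction map on the Schur multiplier $H_2(\mathfrak A_n,\mathbb Z)\cong H^3(\mathfrak A_n,\mathbb Z)$, equivalently the surjectivity of $H_2(\mathfrak A_{n-1},\mathbb Z)\to H_2(\mathfrak A_n,\mathbb Z)$ for $n\geq8$, a form of homological stability for alternating groups. The one delicate case is $n=8$, where $H_2(\mathfrak A_7,\mathbb Z)\cong\mathbb Z/6\mathbb Z$ is exceptionally large but still surjects onto $H_2(\mathfrak A_8,\mathbb Z)\cong\mathbb Z/2\mathbb Z$; restricting instead all the way down to $\mathfrak A_4\subset\mathfrak A_n$, where $\widehat{\mathfrak A}_4\cong\mathrm{SL}_2(\mathbb F_3)$ is manifestly nonsplit, gives a uniform argument avoiding any appeal to stability results.
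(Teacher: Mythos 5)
Your proof is correct and follows the same reduction as the paper: identify $H^1(K_{n-1}(A),\mathbb Z)\simeq N^{\oplus 4}$, run the long exact sequence of $0\to\mathbb Z_{\triv}\to M\to N\to 0$, kill $H^2(\mathfrak A_n,M)$ via Shapiro, and reduce everything to the injectivity of the restriction $H^3(\mathfrak A_n,\mathbb Z_{\triv})\to H^3(\mathfrak A_{n-1},\mathbb Z_{\triv})$. The only divergence is in how that injectivity is established: the paper simply invokes Schur's classical description of $H^3(\mathfrak A_n,\mathbb Z_{\triv})$ and asserts injectivity of restriction for $n\geq 7$, whereas you give a self-contained proof --- identifying the generator as the Bockstein of the class of the spin double cover $\widehat{\mathfrak A}_n$ (using $H^2(\mathfrak A_m,\mathbb Z_{\triv})=0$ for $m\geq 5$ to see $\beta$ is injective) and checking that this cover restricts nonsplit, ultimately because its restriction to a Klein four-group in an $\mathfrak A_4$ is $Q_8$. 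This buys an explicit, citation-free argument (and, as you note, restricting all the way to $\mathfrak A_4$ makes it uniform in $n$), at the cost of importing the identification of $2\cdot\mathfrak A_n$ with the pullback of $\mathrm{Spin}(n)\to\mathrm{SO}(n)$; the paper's version is shorter but leans entirely on the reference to Schur.
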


\begin{proof}
Keeping the notation introduced in Section \ref{sec_notation}, we have 
$$H^2(\mathfrak A_{n}, H^1(K_{n-1}(A), \mathbb Z)) \simeq H^2(\mathfrak A_n, N) \otimes H^1(A, \mathbb Z).$$ Thus, we may focus on the cohomology group $H^2(\mathfrak A_n, N)$. From the long exact sequence in group cohomology, we obtain the exact sequence
\begin{equation*}\label{eq:Z_M_N} H^2(\mathfrak A_n, M) \to H^2(\mathfrak A_n, N) \to H^3(\mathfrak A_n, \mathbb Z_{\triv}) \to H^3(\mathfrak A_n, M).\end{equation*}
    \begin{claim*}
         We have $H^2(\mathfrak A_n, M) \simeq H^2(\mathfrak A_{n-1}, \mathbb Z_{\triv})$.
    \end{claim*}
    \begin{proof}[Proof of the Claim]
    Consider the natural action of $\mathfrak A_n$ on $X = \{1, \dots, n\}$. By definition, we have $M \simeq \mathbb Z[X]$ as $\mathfrak A_n$-modules. As elements in $X$ are stabilized by groups isomorphic to $\mathfrak A_{n-1}$, we have
$$M \simeq \Ind_{\mathfrak A_{n-1}}^{\mathfrak A_n} \mathbb Z_{\triv},$$
c.f.\ Corollary \ref{cor:ind_explicit_trivial}.
The claim then follows by Shapiro's Lemma \ref{lem:shapiro}.
    \end{proof}
Since $n \geq 8$, we have $H^2(\mathfrak A_{n-1}, \mathbb Z_{\triv}) = 0$ by Proposition \ref{prop:group_coh_an}. It thus remains to show that the map $H^3(\mathfrak A_n, \mathbb Z_{\triv}) \to H^3(\mathfrak A_n, M)$ is injective. 
Composing this map with the isomorphism $H^3(\mathfrak A_n, M) \simeq H^3(\mathfrak A_{n-1}, \mathbb Z_{\triv})$ given by Shapiro's Lemma \ref{lem:shapiro}, we obtain a map $H^3(\mathfrak A_n, \mathbb Z_{\triv}) \to H^3(\mathfrak A_{n-1}, \mathbb Z_{\triv})$, which agrees with the restriction homomorphism, see \cite[p.\ 60]{neukirch}. 
From the classical description of $H^3(\mathfrak A_n, \mathbb Z_{\triv})$ given by Schur (\cite{schur}),  it follows that the restriction homomorphism is injective for $n \geq 7$. We conclude that $H^2(\mathfrak A_n, N)$ vanishes.
\end{proof}

\subsection{Proof of Proposition \ref{prop:spectralval}: the case \texorpdfstring{${\bf p = 1}$}{(p, q)=(2, 1)} }

\begin{lemma}\label{lem:p=1}
    For $n \geq 8$, we have
    $$H^1(\mathfrak A_n, H^2(K_{n-1}(A), \mathbb Z)) \simeq (\mathbb Z / 2 \mathbb Z)^{4}.$$
\end{lemma}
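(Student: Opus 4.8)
The plan is to reduce the computation to the $\mathfrak A_n$-modules $M=\Z^n$ (the permutation module) and $N=M/\Z_\triv$ of Section \ref{sec_notation}, where Shapiro's Lemma \ref{lem:shapiro} and Proposition \ref{prop:group_coh_an} give enough control. Since $K_{n-1}(A)\simeq A^{n-1}$ is an abelian variety, $H^2(K_{n-1}(A),\Z)\cong\bigwedge^2 H^1(K_{n-1}(A),\Z)$, and by Section \ref{sec_notation} we have $H^1(K_{n-1}(A),\Z)\cong N\otimes L$ with $L\coloneqq H^1(A,\Z)\cong\Z^4$ a trivial $\mathfrak A_n$-module. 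Picking a basis of $L$ yields an $\mathfrak A_n$-equivariant decomposition
$$H^2(K_{n-1}(A),\Z)\cong (\textstyle\bigwedge^2 N)^{\oplus 4}\oplus (N\otimes N)^{\oplus 6};$$
one should use precisely this splitting and not the $\Sym$–$\bigwedge$ plethysm decomposition of $\bigwedge^2(N\otimes L)$, which fails to be a direct sum over $\Z$ at the prime $2$. It therefore suffices to prove, for $n\geq 8$, that $H^1(\mathfrak A_n,N\otimes N)=0$ and $H^1(\mathfrak A_n,\bigwedge^2 N)\cong\Z/2\Z$.

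For the vanishing of $H^1(\mathfrak A_n,N\otimes N)$, I first record, for $n\geq 8$, that $H^i(\mathfrak A_n,M)\cong H^i(\mathfrak A_{n-1},\Z_\triv)$ (Shapiro, as in the Claim in Lemma \ref{lem:p2}), and, using $M\otimes M\cong\Z[X\times X]\cong\Ind_{\mathfrak A_{n-1}}^{\mathfrak A_n}\Z_\triv\oplus\Ind_{\mathfrak A_{n-2}}^{\mathfrak A_n}\Z_\triv$ with $X=\{1,\dots,n\}$ (Corollary \ref{cor:ind_explicit_trivial}), that $H^i(\mathfrak A_n,M\otimes M)\cong H^i(\mathfrak A_{n-1},\Z_\triv)\oplus H^i(\mathfrak A_{n-2},\Z_\triv)$; by Proposition \ref{prop:group_coh_an} all of these vanish for $i=1,2$ once $n\geq 8$. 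Together with $H^2(\mathfrak A_n,N)=0$ (established in the proof of Lemma \ref{lem:p2}) and $H^1(\mathfrak A_n,N)=0$ (from $0\to\Z_\triv\to M\to N\to0$, since $H^1(\mathfrak A_n,M)=0$ and $H^2(\mathfrak A_n,\Z_\triv)=0$), tensoring $0\to\Z_\triv\to M\to N\to0$ first with $M$ and then with $N$ produces surjections $H^1(\mathfrak A_n,M\otimes M)\twoheadrightarrow H^1(\mathfrak A_n,M\otimes N)\twoheadrightarrow H^1(\mathfrak A_n,N\otimes N)$, whence $H^1(\mathfrak A_n,N\otimes N)=0$.

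For $\bigwedge^2 N$, applying $\bigwedge^2$ to $0\to\Z_\triv\to M\to N\to0$ gives a short exact sequence $0\to N\to\bigwedge^2 M\to\bigwedge^2 N\to0$: the sub-object is $\operatorname{im}(\Z_\triv\otimes M\to\bigwedge^2 M)$, which via $m\mapsto(\sum_i e_i)\wedge m$ is $\mathfrak A_n$-equivariantly identified with $N$ (as $\sum_i e_i$ is $\mathfrak A_n$-fixed). Since $H^1(\mathfrak A_n,N)=H^2(\mathfrak A_n,N)=0$ for $n\geq 8$, the long exact sequence gives $H^1(\mathfrak A_n,\bigwedge^2 N)\cong H^1(\mathfrak A_n,\bigwedge^2 M)$. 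Now $\bigwedge^2 M$ is the signed permutation module on the $2$-element subsets of $X$; the $\mathfrak A_n$-action is transitive, the stabilizer of $\{1,2\}$ is isomorphic to $\mathfrak S_{n-2}$ (project onto its action on $\{3,\dots,n\}$), and $e_1\wedge e_2$ is acted on through the sign character of this $\mathfrak S_{n-2}$, so $\bigwedge^2 M\cong\Ind_{\mathfrak S_{n-2}}^{\mathfrak A_n}\Z_{\mathrm{sgn}}$. By Shapiro's Lemma, $H^1(\mathfrak A_n,\bigwedge^2 M)\cong H^1(\mathfrak S_{n-2},\Z_{\mathrm{sgn}})$, and a short direct computation with crossed homomorphisms shows $H^1(\mathfrak S_m,\Z_{\mathrm{sgn}})\cong\Z/2\Z$ for every $m\geq 2$. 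As $n-2\geq 6$, this gives $H^1(\mathfrak A_n,\bigwedge^2 N)\cong\Z/2\Z$, and hence $H^1(\mathfrak A_n,H^2(K_{n-1}(A),\Z))\cong(\Z/2\Z)^{4}$.

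I expect the main obstacle to be this last step: getting the sign twist right in the identification $\bigwedge^2 M\cong\Ind_{\mathfrak S_{n-2}}^{\mathfrak A_n}\Z_{\mathrm{sgn}}$ (including the identification $\operatorname{Stab}_{\mathfrak A_n}(\{1,2\})\cong\mathfrak S_{n-2}$) and pinning down the twisted cohomology $H^1(\mathfrak S_{n-2},\Z_{\mathrm{sgn}})$. Everything else is a matter of chasing long exact sequences through the vanishing ranges of $H^*(\mathfrak A_k,\Z_\triv)$ already recorded in Proposition \ref{prop:group_coh_an}.
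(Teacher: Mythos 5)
Your proof is correct and follows essentially the same route as the paper: the same splitting $H^2(K_{n-1}(A),\Z)\simeq(\wedge^2 N)^{\oplus 4}\oplus(N\otimes N)^{\oplus 6}$, the same comparison of $\wedge^2 N$ with $\wedge^2 M$ and of $N\otimes N$ with $M\otimes M$ via the sequence $0\to\Z_{\triv}\to M\to N\to 0$, and the same identification of $\wedge^2 M$ and $M\otimes M$ as induced modules handled by Shapiro's Lemma. The only (cosmetic) deviations are that you pass through $M\otimes N$ in two steps where the paper uses the submodule $(\Z_{\triv}\otimes M)+(M\otimes\Z_{\triv})$, and that you compute $H^1(\mathfrak S_{n-2},\Z_{\mathrm{sgn}})\simeq\Z/2\Z$ directly with crossed homomorphisms where the paper uses the Hochschild--Serre sequence for $\mathfrak A_{n-2}\subset\mathfrak S_{n-2}$.
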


\begin{proof}
Keeping the notation introduced in \ref{sec_notation}, we have
$$H^2(K_{n-1}(A), \mathbb Z) \simeq \wedge^2 H^1(K_{n-1}(A), \mathbb Z) \simeq \wedge^2 (N^{\oplus 4}) \simeq (\wedge^2 N)^{\oplus 4} \oplus (N \otimes N)^{\oplus \binom{4}{2}}$$
as $\mathfrak A_n$-modules.

We first compute $H^1(\mathfrak A_n, \wedge^2 N) \simeq \mathbb Z / 2\mathbb Z$ as follows: There is a short exact sequence
$$0 \to N \otimes_{\mathbb Z} \mathbb Z_{\triv} \to \wedge^2 M \to \wedge^2 N \to 0$$
of $\mathfrak A_n$-modules
and therefore an exact sequence
$$H^1(\mathfrak A_n, N) \to H^1(\mathfrak A_n, \wedge^2M) \to H^1(\mathfrak A_n, \wedge^2N) \to H^2(\mathfrak A_n, N).$$
Using the exact sequence 
$$H^1(\mathfrak A_n, M) \to H^1(\mathfrak A_n, N) \to H^2(\mathfrak A_n, \mathbb Z_{\triv}),$$ combined with the fact that $H^1(\mathfrak A_n, M) \simeq H^1(\mathfrak A_{n-1}, \mathbb Z_{\triv}) = 0$ vanishes by Shapiro's Lemma \ref{lem:shapiro}, and the vanishing of $H^2(\mathfrak A_n, \mathbb Z_{\triv}) = 0$ for $n \geq 5$, cf.\ Proposition \ref{prop:group_coh_an}, we obtain the vanishing of $H^1(\mathfrak A_n, N)$. Moreover, we have $H^2(\mathfrak A_n, N) = 0$ by the proof of Lemma \ref{lem:p2}. Thus, we have $$H^1(\mathfrak A_n, \wedge^2 M) \simeq H^1(\mathfrak A_n, \wedge^2 N),$$ which allows us to conclude by applying Claim \ref{claim:Shapiro's_cool} below.

A similar strategy allows to compute \(H^1(\alt_n, N\otimes N) = 0\): there are short exact sequences
$$0 \to (\mathbb Z_{\triv} \otimes M) + (M \otimes \mathbb Z_{\triv}) \to M \otimes M \to N \otimes N \to 0$$
and
$$0 \to \mathbb Z_{\triv} \otimes \mathbb Z_{\triv} \to (\mathbb Z_{\triv} \otimes M) + (M \otimes \mathbb Z_{\triv}) \to \mathbb Z_{\triv} \otimes N \oplus N \otimes \mathbb Z_{\triv} \to 0$$
of $\mathfrak A_n$-modules and thus exact sequences
$$H^1(\mathfrak A_n, M \otimes M) \to H^1(\mathfrak A_n, N \otimes N) \to H^2(\mathfrak A_n, (\mathbb Z_{\triv} \otimes M) + (M \otimes \mathbb Z_{\triv}))$$
and
$$H^2(\mathfrak A_n, \mathbb Z_{\triv}) \to H^2(\mathfrak A_n, (\mathbb Z_{\triv} \otimes M) + (M \otimes \mathbb Z_{\triv})) \to H^2(\mathfrak A_n, N)^{\oplus 2}.$$
As above, we have $H^2(\mathfrak A_n, \mathbb Z_{\triv}) = 0$ and $H^2(\mathfrak A_n, N) = 0$ by Proposition \ref{prop:group_coh_an} and the proof of Lemma \ref{lem:p2}. Hence, we obtain $$H^2(\mathfrak A_n, (\mathbb Z_{\triv} \otimes M) + (M \otimes \mathbb Z_{\triv})) = 0$$ and conclude by Claim \ref{claim:Shapiro's_cool}.
\end{proof}

\begin{claim}\label{claim:Shapiro's_cool}
       For $n \geq 4$, we have
    \[
    H^1(\alt_n, \wedge ^2 M) \simeq  \Z / 2 \Z \\ \ 
\ \ \text{ and } \ \ \ 
    H^1(\alt_n, M \otimes M) = 0.
    \]
\end{claim}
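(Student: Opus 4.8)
The plan is to reduce both computations to Shapiro's Lemma applied to honest permutation modules, so that everything follows from the values of $H^{\ast}(\mathfrak A_m,\mathbb Z_{\triv})$ recorded in Proposition \ref{prop:group_coh_an}. Throughout, $M=\mathbb Z[X]$ with $X=\{1,\dots,n\}$, as in Section \ref{sec_notation}.

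First I would handle $M\otimes M$. We have $M\otimes M\simeq\mathbb Z[X\times X]$ with the diagonal $\mathfrak A_n$-action, so it suffices to understand the $\mathfrak A_n$-orbits on $X\times X$. For $n\geq 4$ there are exactly two: the diagonal, with point stabilizer $\simeq\mathfrak A_{n-1}$, and its complement $X^{(2)}$ (ordered pairs of distinct elements), on which $\mathfrak A_n$ acts transitively with point stabilizer $\simeq\mathfrak A_{n-2}$; the point is that for $n\geq 4$ any permutation sending $(1,2)$ to a prescribed ordered pair of distinct letters can be multiplied by a transposition of two letters outside that pair to make it even. Hence $M\otimes M\simeq\Ind_{\mathfrak A_{n-1}}^{\mathfrak A_n}\mathbb Z_{\triv}\oplus\Ind_{\mathfrak A_{n-2}}^{\mathfrak A_n}\mathbb Z_{\triv}$ by Corollary \ref{cor:ind_explicit_trivial}, and Shapiro's Lemma \ref{lem:shapiro} gives $H^1(\mathfrak A_n,M\otimes M)\simeq H^1(\mathfrak A_{n-1},\mathbb Z_{\triv})\oplus H^1(\mathfrak A_{n-2},\mathbb Z_{\triv})=0$, since $H^1(G,\mathbb Z_{\triv})=\Hom(G,\mathbb Z)$ vanishes for every finite group $G$.

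Next I would read off $H^1(\mathfrak A_n,\wedge^2 M)$ from the short exact sequence of $\mathfrak A_n$-modules $0\to\wedge^2 M\to M\otimes M\to\Sym^2 M\to 0$, where $a\wedge b\mapsto a\otimes b-b\otimes a$ and $\Sym^2 M$ is the symmetric square (the cokernel of this map). Its long exact sequence, combined with the vanishing of $H^1(\mathfrak A_n,M\otimes M)$ just proved, yields $H^1(\mathfrak A_n,\wedge^2 M)\simeq\coker\!\bigl((M\otimes M)^{\mathfrak A_n}\to(\Sym^2 M)^{\mathfrak A_n}\bigr)$. For $n\geq 4$ both invariant submodules are free of rank $2$: using the orbit count above, $(M\otimes M)^{\mathfrak A_n}$ is spanned by $v_1=\sum_i e_i\otimes e_i$ and $v_2=\sum_{i\neq j}e_i\otimes e_j$, while $(\Sym^2 M)^{\mathfrak A_n}$ is spanned by $w_1=\sum_i e_i^2$ and $w_2=\sum_{i<j}e_i e_j$. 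The symmetrization map sends $v_1\mapsto w_1$ and $v_2\mapsto 2w_2$, so its cokernel is $\mathbb Z/2\mathbb Z$, which gives $H^1(\mathfrak A_n,\wedge^2 M)\simeq\mathbb Z/2\mathbb Z$.

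I expect the only points requiring care to be bookkeeping ones rather than a genuine obstacle: checking that the complement of the diagonal in $X\times X$ is a single $\mathfrak A_n$-orbit exactly when $n\geq 4$ (this is what forces the hypothesis; for $n=3$ it splits and the answer changes), and tracking the factor $2$ in $v_2\mapsto 2w_2$, which is precisely the origin of the $2$-torsion. Everything else is a direct application of Shapiro's Lemma and the already-established vanishing $H^1(\mathfrak A_m,\mathbb Z_{\triv})=0$.
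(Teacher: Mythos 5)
Your computation of $H^1(\alt_n, M\otimes M)$ is exactly the paper's argument: decompose $X\times X$ into the diagonal and off-diagonal orbits (using $2$-transitivity of $\alt_n$ for $n\geq 4$), identify the two summands as $\Ind_{\alt_{n-1}}^{\alt_n}\Z_{\triv}$ and $\Ind_{\alt_{n-2}}^{\alt_n}\Z_{\triv}$, and apply Shapiro. For $\wedge^2M$, however, you take a genuinely different route. The paper identifies $\wedge^2M$ itself as an induced module, namely $\Ind_H^{\alt_n}\Z_-$ where $H=\Stab_{\alt_n}(\{1,2\})\simeq\mathfrak S_{n-2}$ acts on $\Z_-$ through the sign of the extension $1\to\alt_{n-2}\to H\to\Z/2\Z\to1$; Shapiro plus the Hochschild--Serre sequence then give $H^1(\alt_n,\wedge^2M)\simeq H^1(\Z/2\Z,\Z_-)\simeq\Z/2\Z$. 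You instead use the integral short exact sequence $0\to\wedge^2M\to M\otimes M\to\Sym^2M\to0$ (valid since $M$ is free over $\Z$) together with the vanishing of $H^1(\alt_n,M\otimes M)$ to reduce to the cokernel of the map on invariants, and the factor $2$ in $v_2\mapsto 2w_2$ produces the $\Z/2\Z$. Both arguments are correct and both use $2$-transitivity, hence $n\geq4$. Your version is more elementary --- it avoids induction from a non-trivial coefficient module and the Hochschild--Serre spectral sequence, at the price of an explicit (but easy) invariants computation and of making the $\wedge^2$ statement logically dependent on the $M\otimes M$ statement; the paper's version keeps the two computations independent and exhibits the $2$-torsion conceptually as the sign character of the pair-stabilizer.
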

\begin{proof}
Let $X$ be the set of unordered pairs in $\{1, \dots, n\}$. 
Pick $x_0  =\{1, 2\} \in X$ and let $H \coloneqq \Stab_{A_n}(x_0).$ The stabilizer $H \simeq \mathfrak S_{n-2}$ fits into an extension
$$1 \to \mathfrak A_{n-2} \to H \to \mathbb Z / 2 \mathbb Z \to 1.$$
Let $\mathbb Z_{-}$ denote the $H$-module which is $\mathbb Z$ as an abelian group and on which $\mathfrak A_{n-2}$ acts trivially and the non-trivial element of $\mathbb Z / 2 \mathbb Z$ acts as multiplication by $-1$. As the action of $\mathfrak A_n$ on $\{1, \dots, n\}$ is $2$-transitive for $n \geq 4$, we have
$$\wedge^2 M \simeq \Ind_{H}^{\mathfrak A_n} \mathbb Z_{-},$$
c.f.\ Lemma \ref{lem:ind_explicit}.
By Shapiro's Lemma, there is an isomorphism
$$H^1(\mathfrak A_n, \Ind_H^{\mathfrak A_n} \mathbb Z_-) \simeq H^1(H, \mathbb Z_-).$$
The Hochschild--Serre spectral sequence for the normal subgroup $\mathfrak A_{n-2} \subset H$ yields the exact sequence
$$0 \to H^1(\mathbb Z / 2 \mathbb Z, \mathbb Z_-) \to H^1(H, \mathbb Z_-) \to H^1(\mathfrak A_{n-2}, \mathbb Z_{\triv})^{\mathbb Z / 2 \mathbb Z}.$$
But $\mathfrak A_{n-2}$ is  a finite group and thus $H^1(\mathfrak A_{n-2}, \mathbb Z_{\triv}) = 0$, hence
$$H^1(H, \mathbb Z_-) \simeq H^1(\mathbb Z / 2 \mathbb Z, \mathbb Z_-) \simeq \mathbb Z / 2 \mathbb Z,$$
which yields the first claim.

For the second, let $Y$ denote the set of pairs $\{(i, j) \mid 1 \leq i, j \leq n\}.$ Then we have
$$M \otimes M \simeq \mathbb Z[Y]$$
as an $\mathfrak A_n$-permutation module. As the action of $\mathfrak A_n$ on $\{1, \dots, n\}$ is $2$-transitive for $n \geq 4$, there are exactly two $\mathfrak A_n$-orbits on $Y$: the diagonal $\Delta = \{(i, i)\mid 1 \leq i \leq n\}$ and its complement $Y \setminus \Delta$. Hence, as $\mathfrak A_n$-modules, we have
$$M \otimes M \simeq \mathbb Z[\Delta] \oplus \mathbb Z[Y \setminus \Delta].$$
The stabilizers of elements of $\Delta$ are isomorphic to $\mathfrak A_{n-1}$, while stabilizers of elements of $Y \setminus \Delta$ are isomorphic to $\mathfrak A_{n-2}$. Moreover, we have
$$\mathbb Z[\Delta] \simeq \Ind_{\mathfrak A_{n-1}}^{\mathfrak A_{n}} \mathbb Z_{\triv}  \ \ \ \text{ and } \ \ \  \mathbb Z[Y \setminus \Delta] \simeq \Ind^{\mathfrak A_{n}}_{\mathfrak A_{n-2}} \mathbb Z_{\triv},$$
c.f.\ Corollary \ref{cor:ind_explicit_trivial}.
Applying Shapiro's Lemma \ref{lem:shapiro}, we have
$$H^1(\mathfrak A_n, \mathbb Z[\Delta]) \simeq H^1(\mathfrak A_{n-1}, \mathbb Z_{\triv}) = 0$$
and
$$H^1(\mathfrak A_n, \mathbb Z[Y \setminus \Delta] \simeq H^1(\mathfrak A_{n-2}, \mathbb Z_{\triv}) = 0,$$
as $H^1(-, \mathbb Z_{\triv})$ vanishes for finite groups.
We conclude that $H^1(\mathfrak A_m, M \otimes M) = 0$.
\end{proof}

\section{Conclusion}

In this section, we reap the fruits of our labour and combine the above findings to prove our main result, whose statement we repeat here for convenience.

\begin{theorem*}[Theorem \ref{thm:main}]
If \(n \neq 6\), then
    $$ \gcd(2^3, n) \cdot H^3(\Kum_{n-1}(A), \mathbb Z)_{\tors} = 0.$$
If instead \(n=6\), then
$$6 \cdot H^3(\Kum_{5}(A), \mathbb Z)_{\tors} = 0.$$
\end{theorem*}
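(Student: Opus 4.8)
The plan is to assemble the three prior results into the stated bound. First I would record that, by Corollary~\ref{cor:n-tors_of_Kum}, the group $H^3(\Kum_{n-1}(A),\Z)_{\tors}$ is killed by $n$; in particular it vanishes unless $n\geq 2$, and all its torsion is supported at the primes dividing $n$. Next I would invoke Proposition~\ref{prop:spectral_sequence_argument}: it reduces everything to finding integers $N_1,N_2,N_3$ annihilating $H^p(\mathfrak A_n,H^{3-p}(K_{n-1}(A),\Z))$ for $p=1,2,3$, since then $2N_1N_2N_3$ kills $H^3(\Kum_{n-1}(A),\Z)_{\tors}$. Proposition~\ref{prop:spectralval} provides exactly these numbers: reading off the table (and the concluding sentence), one may take the product $N_1N_2N_3$ to be a $2$-power for $n\geq 8$ — so $2N_1N_2N_3$ is again a $2$-power — while for $n\leq 7$ the groups are $12$-torsion, so $2N_1N_2N_3$ may be taken to be a power of $2$ times a power of $3$.

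Then I would combine the two constraints: $H^3(\Kum_{n-1}(A),\Z)_{\tors}$ is simultaneously $n$-torsion and annihilated by some integer $B_n$ whose only prime factors are $2$ (always) and possibly $3$ (only when $n\leq 7$). Writing $e_2$ for the $2$-adic valuation supplied by Proposition~\ref{prop:spectral_sequence_argument} together with Proposition~\ref{prop:spectralval}, one checks from the table that $2N_1N_2N_3$ has $2$-adic valuation at most $3$ in every case: indeed for $n=4$ the worst case gives $N_1$ of order $4$, $N_2$ of order $2$, $N_3$ of order $2$, so $2N_1N_2N_3$ has valuation $1+2+1+1 = 5$ — here I must instead argue more carefully, not by the crude product but by the fact that each $H^p$ is a subquotient contributing only its own exponent, so the relevant bound on $H^3(\Kum_{n-1}(A),\Z)_{\tors}$ is the product of the exponents $2\cdot\exp(H^1)\cdot\exp(H^2)\cdot\exp(H^3)$, and then intersecting with $n$-torsion. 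So the argument is: $H^3(\Kum_{n-1}(A),\Z)_{\tors}$ is killed by $\gcd(n, 2N_1N_2N_3)$, and one verifies case by case that this gcd divides $\gcd(2^3,n)$ for $n\neq 6$, and divides $6$ for $n=6$.

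Concretely, for $n$ odd the $2$-part of the annihilator and the $n$-torsion are coprime except through the factor from $H^2$ (which is $3$-torsion precisely when $3\mid n$, i.e. $n=3$, where it is $0$ anyway, or $n$ a multiple of $3$ in range); a quick inspection shows $\gcd(n,2N_1N_2N_3)=1$ for every odd $n$, giving Corollary~\ref{cor:nice_statement_even}. For $n$ even and $n\neq 6$, the $3$-torsion contributions (present only for $n=4$, where $H^2$ contributes a $3$ via Proposition~\ref{prop:group_coh_an}... actually the table lists $H^2=(\Z/2)^4$ for $n=4$, so no $3$) are absent, and $\gcd(n,2^{e_2})$ with $e_2\leq 3$ yields $\gcd(2^3,n)$. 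For $n=6$: $H^3(\Kum_5(A),\Z)_{\tors}$ is $6$-torsion by Corollary~\ref{cor:n-tors_of_Kum}, and the annihilator from Propositions~\ref{prop:spectral_sequence_argument} and~\ref{prop:spectralval} involves $2\cdot 2\cdot 3\cdot 6$, whose gcd with $6$ is $6$; so $6\cdot H^3(\Kum_5(A),\Z)_{\tors}=0$, which is the stated (and, given $6$-torsion, vacuous-but-correct) bound.

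The main obstacle I anticipate is bookkeeping precision: Proposition~\ref{prop:spectral_sequence_argument} only says the torsion is an \emph{extension of subquotients}, so the correct annihilator is the product of the \emph{exponents} of the three group-cohomology modules (times $2$), and one must read these exponents — not orders — off the table in Proposition~\ref{prop:spectralval}, then intersect with the $n$-torsion constraint from Corollary~\ref{cor:n-tors_of_Kum}. Once the exponents are $\exp(H^1)\in\{1,4,2\}$, $\exp(H^2)\in\{1,2,3\}$, $\exp(H^3)\in\{1,2,6\}$ across the range, the final verification that $\gcd\bigl(n,\,2\exp(H^1)\exp(H^2)\exp(H^3)\bigr)$ equals $\gcd(2^3,n)$ for $n\neq 6$ and $6$ for $n=6$ is a short finite check (the cases $n\leq 7$ by hand from the table, the cases $n\geq 8$ from the uniform value $2\cdot 2\cdot 1\cdot 2 = 8$), which I would present as a brief case analysis rather than a computation.
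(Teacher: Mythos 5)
Your proposal is correct and follows essentially the same route as the paper: combine the $n$-torsion bound from Corollary~\ref{cor:n-tors_of_Kum} with the annihilator $2N_1N_2N_3$ supplied by Propositions~\ref{prop:spectral_sequence_argument} and~\ref{prop:spectralval}, and take the gcd case by case (the paper's own bookkeeping is slightly cruder, disposing of $n\in\{4,6,8\}$ by the $n$-torsion bound alone and using the blanket $2\cdot 12^3$ for $n=5,7$, but the logic is identical). Your emphasis that the $N_p$ must be read as exponents of the groups in the table, since the torsion is only an iterated extension of subquotients, is exactly the right reading of Proposition~\ref{prop:spectral_sequence_argument}.
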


\begin{proof}
On the one hand, we have $n \cdot H^3(\Kum_{n-1}(A), \mathbb Z)_{\tors} = 0$ by Corollary \ref{cor:n-tors_of_Kum}. Note that this already implies the result for $n \in \{4, 6, 8\}$. On the other hand, by combining Proposition \ref{prop:spectral_sequence_argument} and Proposition \ref{prop:spectralval}, we have $2^3 \cdot H^3(\Kum_{n-1}(A), \mathbb Z)_{\tors} = 0$ for $n = 3$ or $n \geq 8$ and $2 \cdot 12^3 \cdot H^3(\Kum_{n-1}(A), \mathbb Z)_{\tors} = 0$ for $n=5,7$. The claim then follows by taking the greatest common divisor of $n$ and $2^3$ (respectively \ $2 \cdot 12^3$).
\end{proof}

We conclude by highlighting the following corollary once more:
\begin{corollary}
   There is no torsion in the third cohomology group of \hkm s of $\Kum_{2n}$-type.
\end{corollary}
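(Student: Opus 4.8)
The plan is to reduce the statement to the case of a generalized Kummer variety itself and then invoke Theorem \ref{thm:main} (equivalently Corollary \ref{cor:nice_statement_even}). First I would recall that, by definition, a \hkm{} of $\Kum_{2n}$-type is deformation equivalent to some generalized Kummer variety $\Kum_{2n}(A)$ of complex dimension $4n$. By Ehresmann's fibration theorem, any two fibres of a smooth proper family over a connected base are diffeomorphic; hence a \hkm{} $X$ of $\Kum_{2n}$-type is diffeomorphic to $\Kum_{2n}(A)$, and in particular there is an isomorphism of graded rings $H^*(X,\Z)\simeq H^*(\Kum_{2n}(A),\Z)$. Thus it suffices to treat $X=\Kum_{2n}(A)$, i.e.\ to show $H^3(\Kum_{2n}(A),\Z)_{\tors}=0$.

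Now $\Kum_{2n}(A)=\Kum_{m-1}(A)$ with $m=2n+1$ odd, so in particular $m\neq 6$. Applying Theorem \ref{thm:main} with this value of $m$ gives $\gcd(2^3,m)\cdot H^3(\Kum_{m-1}(A),\Z)_{\tors}=0$; since $m$ is odd we have $\gcd(2^3,m)=1$, and therefore $H^3(\Kum_{2n}(A),\Z)_{\tors}=0$. Combined with the diffeomorphism invariance from the first paragraph, this proves the corollary.

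There is no genuine obstacle here beyond bookkeeping: the only point that needs a word of care is the passage from ``$\Kum_{2n}$-type'' (a deformation-theoretic condition) to a statement about singular cohomology of a fixed model, which is exactly where Ehresmann's theorem enters and which is why the conclusion is purely topological rather than only valid for generalized Kummer varieties on the nose.
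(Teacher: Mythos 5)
Your proposal is correct and matches the paper's (implicit) argument: the corollary is just Corollary \ref{cor:nice_statement_even} restated, obtained from Theorem \ref{thm:main} with odd $m=2n+1$ (so $\gcd(2^3,m)=1$), together with the standard fact that deformation-equivalent compact manifolds are diffeomorphic and hence have isomorphic integral cohomology. The only thing you add is spelling out the Ehresmann step, which the paper leaves tacit.
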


\printbibliography

\end{document}